\newcommand{\BibTeX}{{\scshape Bib}\kern-.08em\TeX}
\newcommand{\T}{\S\kern .15em\relax }
\newcommand{\AMS}{$\mathcal{A}$\kern-.1667em\lower.5ex\hbox
        {$\mathcal{M}$}\kern-.125em$\mathcal{S}$}
\subjclass{14C20, 14H60, 14M99}
\author{Pedro \textsc{Montero}}
\address{Univ. Grenoble Alpes, Institut Fourier, F-38000 Grenoble, France.}
\email{pedro.montero@univ-grenoble-alpes.fr}
\title{Newton-Okounkov bodies on projective bundles over curves}
\newcommand{\Psef}{\operatorname{Psef}}
\newcommand{\Nef}{\operatorname{Nef}}
\newcommand{\Amp}{\operatorname{Amp}}
\newcommand{\Bg}{\operatorname{Big}}
\newcommand{\NE}{\operatorname{NE}}
\newcommand{\HN}{\operatorname{HN}}
\newcommand{\N}{\operatorname{N}}
\newcommand{\codim}{\operatorname{codim}}
\newcommand{\ord}{\operatorname{ord}}
\newcommand{\h}{\operatorname{H}}
\newcommand{\rk}{\operatorname{rank}}
\newcommand{\vol}{\operatorname{vol}}
\newcommand{\bs}{\operatorname{Bs}}
\newtheorem{thm}{Theorem}[section]
\newtheorem*{thm*}{Theorem}
\newtheorem*{thma}{Theorem A}
\newtheorem*{propb}{Proposition B}
\newtheorem*{mainthm*}{Main Theorem}
\newtheorem{lemma}[thm]{Lemma}
\newtheorem{cor}[thm]{Corollary}
\newtheorem{propo}[thm]{Proposition}
\newtheorem{remark}[thm]{Remark}
\theoremstyle{definition}
\newtheorem{defn}[thm]{Definition}
\newtheorem{exmp}[thm]{Example}
\newtheorem{defn-thm}[thm]{Definition-Theorem} 
\newtheorem{convention}[thm]{Convention}
\theoremstyle{remark}
\newtheorem{notation}[thm]{Notation}
\newtheorem*{not-and-def}{Notation and definitions}
\numberwithin{equation}{section}
\begin{document}

\def\smfbyname{}

\begin{abstract}
In this article, we study Newton-Okounkov bodies on projective vector bundles over curves. Inspired by Wolfe's estimates used to compute the volume function on these varieties, we compute all Newton-Okounkov bodies with respect to linear flags. Moreover, we characterize semi-stable vector bundles over curves via Newton-Okounkov bodies.
\end{abstract}

\maketitle

\tableofcontents
\section{Introduction}

Let $C$ be a smooth projective curve over an algebraically closed field of characteristic zero and let $E$ be vector bundle over $C$ of rank $r\geq 2$. It is well-known since Hartshorne's work \cite{Ha71} that numerical information coming from semi-stability properties of $E$ can be translated into positivity conditions. Namely, the shape of the nef and pseudo-effective cone of the projective vector bundle $\mathbb{P}(E)$ were determined by Hartshorne \cite{Ha71} (cf. \cite[\S 6.4.B]{Laz04}) and Nakayama \cite[Chapter IV]{Nak04} (cf. \cite{MDS15}) in terms of the Harder-Narasimhan filtration of $E$. More precisely, if we denote by $\xi$ the class of the tautological line bundle $\mathcal{O}_{\mathbb{P}(E)}(1)$ and by $f$ the class of a fiber of $\pi:\mathbb{P}(E)\to C$ then we have that for $t\in \mathbb{R}$ the class $\xi - tf$ is nef (resp. pseudo-effective) if and only if $t\leq \mu_{\min}(E)$ (resp. $t\leq \mu_{\max}(E)$), where $\mu_{\min}(E)$ (resp. $\mu_{\max}(E)$) is the minimal (resp. maximal) slope of $E$. In particular, the nef and pseudo-effective cone of $\mathbb{P}(E)$ coincide if and only if $E$ is semi-stable (cf. \cite[Theo. 3.1]{Miy87} and \cite{Ful11}).

Indirectly, the pseudo-effective cone can be also deduced from the work of Wolfe \cite{Wol05} and Chen \cite{Che08}, who explicitly computed the volume function on $\mathbb{P}(E)$. In fact, they showed that for every $t\in \mathbb{R}$ the volume of the class $\xi - tf$ on $\mathbb{P}(E)$ can be expressed in terms of numerical information coming from the Harder-Narasimhan filtration of $E$,
$$\operatorname{HN}_\bullet(E) : 0=E_\ell \subseteq E_{\ell-1} \subseteq \cdots \subseteq E_1 \subseteq E_0=E $$
with successive semi-stable quotients $Q_i=E_{i-1}/E_i$ of rank $r_i$ and slope $\mu_i$. More precisely, if we consider $\sigma_1 \geq \ldots \geq \sigma_r$ to be the ordered slopes of $E$ counted with multiplicities equal to the rank of the corresponding semi-stable quotient\footnote{In other words, $\mu_{\max}(E)=\sigma_1 \geq \ldots \geq \sigma_r=\mu_{\min}(E)$ can be viewed as the components of the vector $\boldsymbol{\sigma}=(\sigma_1,\ldots,\sigma_r)=(\underbrace{\mu_\ell,\ldots,\mu_\ell}_{r_\ell\text{ times}},\underbrace{\mu_{\ell-1},\ldots,\mu_{\ell-1}}_{r_{\ell-1} \text{ times}},\ldots,\underbrace{\mu_1,\ldots,\mu_1}_{r_1 \text{ times}})\in \mathbb{Q}^r$.}, their results can be summarized as follows.

\begin{theo}\label{theo: volume result, wolfe and chen}
Let $C$ and $E$ as above. Then,
$$\vol_{\mathbb{P}(E)}(\xi-tf)=r! \cdot \int_{\widehat{\Delta}_{r-1}} \max \left\{\sum_{j=1}^r \sigma_{r+1-j} \lambda_j - t,0 \right\}\; d\lambda, $$
\noindent  where ${\widehat{\Delta}}_{r-1}\subseteq \mathbb{R}^r$ is the standard $(r-1)$-simplex with coordinates $\lambda_1,\ldots,\lambda_r$ and $d\lambda$ is the standard induced Lebesgue measure for which $\widehat{\Delta}_{r-1}$ has volume $\frac{1}{(r-1)!}$.
\end{theo}

Following the idea that numerical information encoded by the Harder-Narasimhan filtration of $E$ should be related to asymptotic numerical invariants of $\mathbb{P}(E)$, we study the geometry of Newton-Okounkov bodies on $\mathbb{P}(E)$. These compact convex bodies were introduced by Okounkov in his original article \cite{Ok96} and they were studied later on by Kaveh and Khovanskii \cite{KK12} and Lazarsfeld and Musta\c{t}\u{a} \cite{LM09}, who associated to any big divisor $D$ on a normal projective variety $X$ of dimension $r$, and any complete flag of subvarieties $Y_\bullet$ on $X$ satisfying suitable conditions, a convex body $\Delta_{Y_\bullet}(D)\subseteq \mathbb{R}^r$ depending only on the numerical equivalence class of $D$. Moreover, there exists a {\it global Newton-Okounkov body} $\Delta_{Y_\bullet}(X)\subseteq \mathbb{R}^r\times \N^1(X)_{\mathbb{R}}$ such that the slice of $\Delta_{Y_\bullet}(X)$ over any big rational class $\eta\in \N^1(X)_{\mathbb{Q}}$ is given by $\Delta_{Y_\bullet}(\eta)\subseteq \mathbb{R}^r\times \{\eta\}$.

Newton-Okounkov bodies of big divisors on geometrically ruled surfaces with respect to {\it linear flags} (see Definition \ref{defi: natural flag}) can be computed via Zariski decomposition (see Example \ref{ex:ruled surfaces}). In higher dimension, we will use methods similar to those used by Wolfe to compute the volume function in \cite{Wol05}. More precisely, we will first reduce ourselves to the case of the nef and big class $\xi-\mu_{\min}(E)f$ (see Lemma \ref{lemma:reduction to big nef}). Afterwards, we will need to understand the Harder-Narasimhan filtration of the symmetric products $S^m E$ for $m\geq 1$ and then to consider suitable refinements of these filtrations. 

Let $Y_\bullet$ be a complete {\it linear flag which is compatible with the Harder-Narasimhan filtration of $E$} (see Definition \ref{defi: compatible natural flags}). %with divisorial component $\pi^{-1}(q)\cong \mathbb{P}^{r-1}$ and denote by $\mathbb{F}_r$ be the full flag variety parametrizing complete flags of linear subspaces of $\mathbb{P}^{r-1}$. The linear flag $Y_\bullet^{\HN}$ determines a decomposition of $\mathbb{F}_r$ into {\it Schubert cells} (see $\S 2$ and Convention \ref{convention:schubert cells})
%$$\mathbb{F}_r = \coprod_{w\in \mathfrak{S}_r} \Omega_w.$$ 
With the notation of Theorem \ref{theo: volume result, wolfe and chen} above, define for each real number $t\in \mathbb{R}$ the following polytope inside the full dimensional standard simplex $\Delta_{r-1}$ in $\mathbb{R}^{r-1}$ (see Notation \ref{notation: Wolfe's square})
$$\square_t=\left\{(\nu_2,\ldots,\nu_r)\in \Delta_{r-1}\;\left|\; \sum_{i=2}^r \sigma_{i-1}\nu_i+\sigma_{r}\left(1-\sum_{i=2}^r \nu_i\right) \geq t  \right.\right\}.$$

\noindent Then, we prove the following result (see Corollary \ref{cor: main result}).

\begin{thma}\label{thm: main theorem}
Let $C$ be a smooth projective curve and let $E$ be a vector bundle over $C$ of rank $r\geq 2$. Then, for every real number $t<\mu_{\max}(E)=\mu_\ell$ we have that
%linear flag $Y_\bullet$ on $\mathbb{P}(E)$ that belongs to the Schubert cell $\Omega_w$ and every big rational class $\eta=a(\xi-\mu_\ell f)+bf$ we have that 
$$\Delta_{Y_\bullet}(\xi-t f)=\left\{(\nu_1,\ldots,\nu_r)\in \mathbb{R}^r_{\geq 0}\;|\;0 \leq \nu_1 \leq \mu_\ell - t,\;(\nu_2,\ldots,\nu_r)\in \square_{t+\nu_1} \right\}. $$
%and hence the global Newton-Okounkov body of $\mathbb{P}(E)$ with respect to $Y_\bullet$ is given by
%$$\begin{array}{ll}
%\Delta_{Y_\bullet}(\mathbb{P}(E))=&\Big\{((a(\xi-\mu_\ell f)+bf),(\nu_1,\ldots,\nu_r))\in \N^1(\mathbb{P}(E))_\mathbb{R}\times \mathbb{R}^{r} \text{ such that } \\
%& \;\; 0 \leq \nu_1 \leq b \text{ and } (\nu_2,\ldots,\nu_r)\in a\square_{\mu_\ell - \frac{1}{a}(b-\nu_1)}^{w} \Big\}.   
%\end{array} $$
In particular, the global Newton-Okounkov body $\Delta_{Y_\bullet}(\mathbb{P}(E))$ is a rational polyhedral cone and it depends only on $\operatorname{gr}(\operatorname{HN}_\bullet(E))$, the graded vector bundle associated to the Harder-Narasimhan filtration of $E$.
\end{thma}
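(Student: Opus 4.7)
My approach follows the program sketched in the introduction: first reduce, by means of Lemma~\ref{lemma:reduction to big nef}, to computing the Newton--Okounkov body of a big and nef class, and then analyse the Harder--Narasimhan filtration of the symmetric powers $S^m E$ in the spirit of Wolfe. The starting point is the identification
$$\h^0(\mathbb{P}(E), m(\xi - tf)) \;=\; \h^0(C,\, S^m E \otimes M_{m,t})$$
for $m$ sufficiently divisible, where $M_{m,t}$ is a line bundle on $C$ of degree $-mt$.

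In characteristic zero the HN filtration of $S^m E$ has associated graded pieces of the form $S^{\alpha_1}Q_1 \otimes \cdots \otimes S^{\alpha_\ell}Q_\ell$ for multi-indices $\alpha$ with $|\alpha|=m$, each semi-stable of slope $\sum_i\alpha_i\mu_i$. Thus, asymptotically in $m$, Riemann--Roch combined with semi-stability guarantees sections of the twisted graded piece precisely when $\sum_i\alpha_i\mu_i \geq mt$. To read off the valuation vector $(\nu_1,\ldots,\nu_r)$ of a section $s$, I would use that the flag $Y_\bullet$ comes from a chain of subbundles $E'_\bullet$ refining $\HN_\bullet(E)$: the coordinate $\nu_1(s)$ measures the order of vanishing along $Y_1=\mathbb{P}(E'_1)$, equivalently the depth of $s$ in the sub-filtration of $S^m E$ induced by $E'_1$, while $(\nu_2,\dots,\nu_r)$ is the lex monomial exponent on a generic fiber $\mathbb{P}^{r-1}$ with respect to coordinates adapted to $E'_\bullet$. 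The slope inequality $\sum_i\alpha_i\mu_i \geq m(t+\nu_1)$ then translates, after rescaling by $m$, into the inequality defining $\square_{t+\nu_1}$, yielding the inclusion ``$\subseteq$''.

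The reverse inclusion is obtained by constructing, for each rational point $(\nu_1,\dots,\nu_r)$ in the target polytope and each large enough $m$, an explicit section of $S^m E \otimes M_{m,t}$ whose valuation along $Y_\bullet$ is the prescribed one; this uses the abundance of global sections of semi-stable bundles of positive slope on $C$, applied to the relevant graded pieces. The main obstacle I anticipate lies in the bookkeeping here: one must lift sections from the graded pieces of $\HN_\bullet(S^m E)$ to the filtered object $S^m E$ without disturbing the valuation along $Y_\bullet$, and verify that the refined fiberwise flag really yields the claimed lex monomial valuation. Once the explicit description is in hand, the final ``in particular'' assertions are immediate: the right-hand side depends only on the slope multiset $\{\sigma_j\}$, hence only on $\gr(\HN_\bullet(E))$, and all defining inequalities are linear in $(\nu_1,\dots,\nu_r,t)$, so the global Newton--Okounkov body $\Delta_{Y_\bullet}(\mathbb{P}(E))$ is a rational polyhedral cone.
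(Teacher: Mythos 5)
Your sketch gets the broad strategy right (reduce via Lemma \ref{lemma:reduction to big nef} to a nef class, then exploit the Harder--Narasimhan filtration of $S^mE$), but the central valuation dictionary is based on a misreading of the flag, and the two steps that actually carry the proof are missing. By Definition \ref{defi: natural flag} the divisorial component of a linear flag is a \emph{fiber}, $Y_1=\pi^{-1}(q)$, never a subvariety of the form $\mathbb{P}(E'_1)$; hence $\nu_1(s)$ is the order of vanishing at the point $q\in C$ of the corresponding section of $S^mE\otimes M_{m,t}$, not the ``depth'' of $s$ in a filtration of $S^mE$ induced by a subbundle. Moreover an arbitrary section of the restricted linear series does not lie in a single graded piece of $\operatorname{HN}_\bullet(S^mE)$, and nothing in your argument controls the fiberwise exponents $(\nu_2,\ldots,\nu_r)$ of a general section after restriction to $F=\pi^{-1}(q)$; so the asserted translation of $\sum_i\alpha_i\mu_i\geq m(t+\nu_1)$ into the inequality defining $\square_{t+\nu_1}$, i.e.\ the inclusion ``$\subseteq$'', is not established. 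The reverse inclusion is precisely the point you defer to ``bookkeeping'': lifting sections from graded pieces while prescribing the valuation is nontrivial, and Riemann--Roch on the graded pieces needs genus corrections, so ``sections exist precisely when $\sum_i\alpha_i\mu_i\geq mt$'' is only an asymptotic heuristic.

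The paper resolves both difficulties by a different mechanism, and some substitute for it is needed. After reducing, via Theorem \ref{slices} and continuity, to rational slices of $\Delta_{Y_\bullet}(\xi-\mu_1f)$, one never computes valuations of all sections. Instead, Proposition \ref{propo: HN filtration of S^mE} singles out the part $F_{J+1}\subseteq S^{nm}E\otimes\mathcal{O}_C(-nm(\mu_1+t)\cdot q)$ of the (refined) filtration whose slopes exceed $2g-1$; the vanishing of $h^1$ there makes restriction to the fiber surjective, so in coordinates adapted to the HN-compatible flag one obtains a \emph{monomial} subalgebra $B_{n\bullet}\subseteq A_{n\bullet}$ of the restricted algebra, whose Newton--Okounkov body is computed exactly by the toric Lemma \ref{lemm: NO of special algebras} and equals $n\square^{w}_{\mu_1+t}$ (the Borel-invariance argument in that lemma also handles the residual freedom of the flag inside each block $\mathbb{P}(Q_i|_q)$, which your ``chain of subbundles'' picture does not). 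The missing upper bound is then obtained not by a valuation argument but by a volume comparison: Wolfe--Chen's formula (Corollary \ref{cor: restricted volume}, Lemma \ref{lemma: Huayi volume slices}) shows that $\vol_{\mathbb{R}^{r-1}}(\square^{w}_{\mu_1+t})$ equals the Euclidean volume of the slice, so the inclusion $\Delta_{Y_\bullet|F}(B_{n\bullet})\subseteq\Delta_{Y_\bullet|F}(A_{n\bullet})$ between convex bodies of equal volume is an equality. Without the $h^1$-vanishing/restriction-surjectivity step and the volume comparison (or genuine replacements for them), neither inclusion in your proposal is proved; the final ``in particular'' statements are indeed immediate once the explicit description is available.
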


Moreover, we obtain the following characterization of semi-stability in terms of Newton-Okounkov bodies.

\begin{propb}\label{propo:semi-stable case}
Let $C$ be a smooth projective curve and let $E$ be a vector bundle over $C$ of rank $r\geq 2$. The following conditions are then equivalent:
 \begin{itemize}
  \item[(1)] $E$ is semi-stable.
  \item[(2)] For every big rational class $\eta=a(\xi-\mu_\ell f)+bf$ on $\mathbb{P}(E)$ and every linear flag $Y_\bullet$ on $\mathbb{P}(E)$ we have that $\Delta_{Y_\bullet}(\eta)=[0,b]\times a\Delta_{r-1} \subseteq \mathbb{R}^r$.
 \end{itemize}
Here, $a\Delta_{r-1} =\{(\nu_2,\ldots,\nu_r)\in \mathbb{R}_{\geq 0}^{r-1}\;|\;\sum_{i=2}^r \nu_i \leq a \}$ is the full dimensional standard $(r-1)$-simplex with side length $a$.
\end{propb}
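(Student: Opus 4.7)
The plan is to deduce both directions directly from Theorem A together with the $1$-homogeneity $\Delta_{Y_\bullet}(\lambda D) = \lambda\,\Delta_{Y_\bullet}(D)$ of Newton-Okounkov bodies under positive rational scaling of the big class. The crucial observation is that the inequality defining $\square_t$,
\[
\sum_{i=2}^r \sigma_{i-1}\nu_i + \sigma_r\Bigl(1 - \sum_{i=2}^r \nu_i\Bigr) \geq t,
\]
degenerates to the constant condition $\mu_\ell \geq t$ precisely when $\sigma_1 = \cdots = \sigma_r$, i.e., precisely when $E$ is semi-stable.

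For the direction $(1) \Rightarrow (2)$ I assume $E$ is semi-stable. Then $\operatorname{HN}_\bullet(E)$ is the trivial filtration $0 \subseteq E$, so every linear flag on $\mathbb{P}(E)$ is automatically compatible with it (Definition \ref{defi: compatible natural flags} imposes no nontrivial restriction here), and Theorem A applies to any linear flag $Y_\bullet$. A big rational class $\eta = a(\xi - \mu_\ell f) + bf$ necessarily has $a, b > 0$, and equals $a(\xi - tf)$ with $t := \mu_\ell - b/a < \mu_\ell$. Since every $\sigma_i$ equals $\mu_\ell$, each polytope $\square_{t+\nu_1}$ for $\nu_1 \in [0, \mu_\ell - t]$ coincides with the full simplex $\Delta_{r-1}$, and Theorem A yields $\Delta_{Y_\bullet}(\xi - tf) = [0, \mu_\ell - t] \times \Delta_{r-1}$. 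Scaling by $a$ produces $[0,b] \times a\Delta_{r-1}$, as required.

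For $(2) \Rightarrow (1)$ I argue contrapositively: suppose $E$ is not semi-stable, so $\sigma_r = \mu_{\min}(E) < \mu_\ell$. Pick a rational $t$ with $\sigma_r < t < \mu_\ell$ and a linear flag $Y_\bullet$ compatible with the nontrivial filtration $\operatorname{HN}_\bullet(E)$, whose existence is guaranteed by the construction underlying Definition \ref{defi: compatible natural flags}. The class $\eta := \xi - tf$ is big and corresponds to $(a,b) = (1, \mu_\ell - t)$ in the parametrization of (2). Evaluating the inequality defining $\square_t$ at the corner $(\nu_2, \ldots, \nu_r) = (0, \ldots, 0)$ of $\Delta_{r-1}$ reads $\sigma_r \geq t$, which fails by our choice of $t$. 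Hence this corner lies in $\Delta_{r-1} \setminus \square_t$, and by Theorem A the body $\Delta_{Y_\bullet}(\eta)$ is strictly smaller than $[0, \mu_\ell - t] \times \Delta_{r-1}$, contradicting (2).

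The substantive work sits entirely inside Theorem A; given it, the argument is a straightforward extraction of the semi-stable specialization. The only steps needing a careful word are the two verifications invoked above: that a linear flag is automatically compatible with a trivial HN filtration (so Theorem A holds for all linear flags in the semi-stable case), and that the single-corner witness $(0,\ldots,0) \in \Delta_{r-1}$ suffices to force strict containment whenever $\sigma_r < t$, avoiding any further polytope analysis.
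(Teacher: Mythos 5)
Your direction $(2)\Rightarrow(1)$ is fine and takes a genuinely different route from the paper: you exhibit a witness point (the corner $(0,\ldots,0)$ of $\Delta_{r-1}$, which fails the defining inequality of $\square_t$ as soon as $t>\sigma_r=\mu_{\min}(E)$) inside the claimed box but outside the body given by Theorem A for an unstable $E$ and a compatible flag; the paper instead avoids Theorem A altogether, deducing nefness of $\eta$ from the shape of the bodies via the criterion of K\"uronya--Lozovanu \cite[Cor. 2.2]{KL15b} and then comparing $\vol(\eta)=\eta^r=ra^{r-1}(b-a(\mu_\ell-\mu(E)))$ with $r!\cdot\vol_{\mathbb{R}^r}([0,b]\times a\Delta_{r-1})=ra^{r-1}b$ to force $\mu_\ell=\mu(E)$. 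Both arguments are legitimate; yours needs the full strength of Theorem A in the unstable case, the paper's only needs the volume formula and \cite[Theo. A]{LM09}.

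The problem is your direction $(1)\Rightarrow(2)$: it is circular relative to how Theorem A is actually established. In this paper Theorem A is Corollary \ref{cor: main result}, which rests on Lemma \ref{lemma:reduction to big nef} and Theorem \ref{theo: okounkov bodies}, and the very first step of the proof of Theorem \ref{theo: okounkov bodies} reads: if $E$ is semi-stable, the theorem \emph{follows from Proposition B}, after which $E$ is assumed unstable. This is not a cosmetic choice: the whole reduction pivots on the class $\xi-\mu_1 f$, which is big and nef only when $E$ is unstable (for semi-stable $E$ it lies on the boundary of $\Psef(\mathbb{P}(E))$ and has volume zero, and Lemma \ref{lemma:reduction to big nef} is stated only for unstable $E$), and the interval $]0,\mu_\ell-\mu_1[$ of slices used in Step 1 is empty. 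So invoking Theorem A for semi-stable $E$ is invoking a statement whose only proof in the paper is Proposition B itself; your observation that every linear flag is compatible with the trivial Harder--Narasimhan filtration is correct but does not repair this. An independent argument is needed, and this is exactly what the paper supplies: semi-stability gives $\Nef(\mathbb{P}(E))=\Psef(\mathbb{P}(E))$ (Corollary \ref{Miyaoka}), hence $\mathbf{B}_+(\eta)=\emptyset$ and $\tau_F(\eta)=b$, and then Proposition \ref{propo:psef=nef} identifies each slice $\Delta_{Y_\bullet}(\eta)_{\nu_1=t}$ with the Newton--Okounkov body of $(D_\eta-tF)|_F\equiv aH$ on the fiber $F\cong\mathbb{P}^{r-1}$, i.e.\ with $a\Delta_{r-1}$, yielding $[0,b]\times a\Delta_{r-1}$ directly. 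Either reproduce such an argument or prove Theorem A for semi-stable $E$ from scratch; as written, the forward implication assumes what is to be shown.
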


\subsubsection*{{\bf Outline of the article}} First of all, we establish some notation and recall some basic facts in $\S 2$. Secondly, we recall in $\S 3$ definitions and well-known results about Newton-Okounkov bodies and semi-stability of vector bundles over curves. Next, $\S 4$ is devoted to the different cones of divisors on $\mathbb{P}(E)$ as well as results concerning their volume and restricted volume. Finally, we prove both Theorem A and Proposition B in $\S 5$.

\subsubsection*{{\bf Acknowledgements}} I would like to express my gratitude to my thesis supervisors, St\'ephane \textsc{Druel} and Catriona \textsc{Maclean}, for their advice, helpful discussions and encouragement throughout the preparation of this article. I also thank Bruno \textsc{Laurent}, Laurent \textsc{Manivel} and Bonala \textsc{Narasimha Chary} for fruitful discussions. Finally, I would like to thank the anonymous referee for a very helpful and detailed report.

\section{{Preliminaries}}

Throughout this article all varieties will be assumed to be reduced and {irreducible} schemes of finite type over a fixed algebraically closed field of characteristic zero\footnote{In positive characteristic one should take into account iterates of the absolute Frobenius as in \cite{BP14,BHP14}. It is worth mentioning that in positive characteristic the semistability of a vector bundle over a curve does not imply the semi-stability of its symmetric powers.} $k$.

\subsection{Numerical classes and positivity} We denote by $\N^1(X)$ the group of numerical equivalence classes of Cartier divisors on $X$, and we define $\N^1(X)_{\Bbbk}=\N^1(X)\otimes_{\mathbb{Z}}\Bbbk$ for $\Bbbk=\mathbb{Q}$ or $\mathbb{R}$. All the $\mathbb{R}$-divisors that we consider are $\mathbb{R}$-Cartier. Dually, we denote by $\N_1(X)$ the group of numerical equivalence classes of 1-cycles on $X$. Inside $\N_1(X)_{\mathbb{R}}=\N_1(X)\otimes_\mathbb{Z}\mathbb{R}$ we distinguish the {\it Mori cone} $\overline{\NE}(X)\subseteq \N_1(X)_\mathbb{R}$, which is the closed and convex cone generated by numerical classes of 1-cycles with non-negative real coefficients.

Let $E$ be a locally free sheaf on a variety $X$. We follow Grothendieck's convention and we define the {\it projectivization $\mathbb{P}_X(E)=\mathbb{P}(E)$ of $E$} to be ${\mathbf{Proj}_{\mathcal{O}_X} \oplus_{m\geq 0} S^m E}$, where $S^m E$ denotes de $m$th symmetric power of $E$. This variety is endowed with a natural projection $\pi:\mathbb{P}(E)\to X$ and a tautological line bundle $\mathcal{O}_{\mathbb{P}(E)}(1)$.

Let $X$ be a normal projective variety. Following \cite{Laz04}, we say that a numerical class $\eta\in \N^1(X)_{\mathbb{R}}$ is {\it big} if there exists an effective $\mathbb{R}$-divisor $E$ such that $\eta-E$ is ample. We denote by $\Bg(X)\subseteq \N^1(X)_{\mathbb{R}}$ the open convex cone of big numerical classes. A numerical class $\eta\in \N^1(X)_{\mathbb{R}}$ is called {\it pseudo-effective} if it can be written as the limit of classes of effective $\mathbb{R}$-divisors. The pseudo-effective cone is the closure of the big cone: $\overline{\Bg}(X)=\Psef(X)$ (see \cite[Theo. 2.2.26]{Laz04}, for instance). Moreover, a numerical class $\eta\in \N^1(X)_{\mathbb{R}}$ is {\it nef} if $\eta\cdot [C]\geq 0$ for every $[C]\in \overline{\NE}(X)$, and is {\it ample} if it is the numerical class of an $\mathbb{R}$-divisor that can be written as a finite sum of ample Cartier divisors with positive real coefficients. The cone $\Nef(X)\subseteq \N^1(X)_{\mathbb{R}}$ of nef classes is closed convex, and its interior $\Amp(X)$ is the cone of ample classes, by Kleiman's ampleness criterion. A line bundle $L$ is big (resp. pseudo-effective, ample, nef) if and only if its numerical class $c_1(L)\in \N^1(X)_{\mathbb{R}}$ is big (resp. pseudo-effective, ample, nef).

Let us recall that the {\it stable base locus} of a $\mathbb{Q}$-divisor $D$ on $X$ is the closed set
$$\mathbf{B}(D)=\bigcap_{m>0}\bs(mD) $$
where $\bs(mD)$ is the base locus of the complete linear series $|mD|$. Following \cite{ELMNP06}, we define the {\it augmented base locus} of $D$ to be the Zariski closed set
$$\mathbf{B}_+(D)=\bigcap_{A}\mathbf{B}(D-A),  $$
where the intersection runs over all ample $\mathbb{Q}$-divisors $A$. Similarly, the {\it restricted base locus} of $D$ is defined by
$$\mathbf{B}_-(D)=\bigcup_{A}\mathbf{B}(D+A),  $$
where the union runs over all ample $\mathbb{Q}$-divisors $A$. The restricted base locus $\mathbf{B}_-(D)$ consist of at most a countable union of subvarieties whose Zariski closure is contained in $\mathbf{B}_+(D)$ (see \cite[Rema. 1.13]{ELMNP06} and \cite{Les14}). By \cite[Prop. 1.4, Exam. 1.8, Prop. 1.15, Exam. 1.16]{ELMNP06}, both $\mathbf{B}_{-}(D)$ and $\mathbf{B}_+(D)$ depend only on the numerical class of $D$, there is an inclusion $\mathbf{B}_{-}(D) \subseteq \mathbf{B}_+(D)$, and for any rational number $c>0$ we have ${\mathbf{B}_{-}(cD)=\mathbf{B}_{-}(D)}$ and ${\mathbf{B}_+(cD)=\mathbf{B}_+(D)}$. Moreover by \cite[Exam. 1.7, Exam. 1.18]{ELMNP06} we have that ${\mathbf{B}_+(D)=\emptyset}$ if and only if $D$ is ample, and that $\mathbf{B}_-(D)=\emptyset$ if and only if $D$ is nef.

\subsection{Flag varieties and Schubert cells} Let us denote by $\mathbb{F}_r$ the full flag variety parametrizing all complete linear flags on $\mathbb{P}^{r-1}$. Recall that if we fix a reference complete linear flag $Y_\bullet$ on $\mathbb{P}^{r-1}$, then there is a decomposition of $\mathbb{F}_r$ into {\it Schubert cells}
$$\mathbb{F}_r=\coprod_{w\in \mathfrak{S}_r}\Omega_w. $$
More explicitly, if we consider homogeneous coordinates $[x_1:\ldots:x_r]$ on $\mathbb{P}^{r-1}$, we assume that for every $i=1,\ldots,r-1$ we have
$$Y_i=\{x_1=\ldots=x_i=0\}\subseteq \mathbb{P}^{r-1}$$
and we regard the permutation group $\mathfrak{S}_r$ as a subgroup of $\operatorname{PGL}_r(k)$ via its natural action on the standard basis points $e_1,\ldots,e_r\in \mathbb{P}^{r-1}$ then we have that $\Omega_w$ is the orbit $B\cdot Y_\bullet^w$, where $B$ denotes the (Borel) subgroup of $\operatorname{PGL}_r(k)$ that fixes the flag $Y_\bullet$ and $Y_\bullet^w$ is the complete linear flag such that for every $i=1,\ldots,r-1$ we have
$$Y_i^w = \{x_{w(1)}=\ldots=x_{w(i)}=0 \} \subseteq \mathbb{P}^{r-1}.$$
We refer the reader to \cite[$\S 1.2$]{Bri05} and \cite[$\S 3.6$]{Man98} for further details.

\section{{Newton-Okounkov bodies and Semi-stability}}

In this section, we review the construction of Newton-Okounkov bodies and semi-stability of vector bundles over smooth projective curves. 

\subsection{Newton-Okounkov bodies}

Let $X$ be a smooth projective variety of dimension $n$ and let $L$ be a big line bundle on $X$. A full flag of closed subvarieties of $X$ centered at the point $p\in X$ 
$$Y_\bullet: X = Y_0 \supseteq Y_1 \supseteq Y_2 \supseteq \cdots \supseteq Y_{n-1} \supseteq Y_n = \{p\} $$

\noindent is an {\it admissible flag} if $\codim_X(Y_i)=i$, and each $Y_i$ is smooth at the point $p$. In particular, $Y_{i+1}$ defines a Cartier divisor on $Y_i$ in a neighborhood of the point $p$. Following the work of Okounkov \cite{Ok96,Ok00}, Kaveh and Khovanskii \cite{KK12} and Lazarsfeld and Musta\c{t}\u{a} \cite{LM09} independently associated to $L$ and $Y_\bullet$ a convex body $\Delta_{Y_\bullet}(X,L)\subseteq \mathbb{R}^n$ encoding the asymptotic properties of the complete linear series $|L^{\otimes m}|$. We will follow the presentation of \cite{LM09} and we refer the interested reader to the survey \cite{Bo12} for a comparison of both points of view.

Let $D$ be any divisor on $X$ and let $s=s_1\in H^0(X,\mathcal{O}_X(D))$ be a non-zero section. We shall compute successive vanishing orders of global sections in the following manner: let $D_1=D+\operatorname{div}(s_1)$ be the effective divisor in the linear series $|D|$ defined by $s_1$ and set $\nu_1(s)=\ord_{Y_1}(D_1)$ the coefficient of $Y_1$ in $D_1$. Then $D_1 - \nu_1(s)Y_1$ is an effective divisor in the linear series $|D-\nu_1(s)Y_1|$, and does not contain $Y_1$ in its support, so we can define $D_2=(D_1-\nu_1(s)Y_1)|_{Y_1}$ and set $\nu_2(s)=\ord_{Y_2}(D_2)$. We proceed inductively in order to get $\nu_{Y_\bullet}(s)=(\nu_1(s),\ldots,\nu_n(s))\in \mathbb{N}^d$. This construction leads to a valuation-like function 
$$\nu_{Y_\bullet}: \h^0(X,\mathcal{O}_X(D))\setminus \{0\} \to \mathbb{Z}^n,\;s\mapsto (\nu_1(s),\ldots,\nu_n(s)). $$
We then define the {\it graded semigroup} of $D$ to be the sub-semigroup of $\mathbb{N}^n\times \mathbb{N}$ defined by
$$\Gamma_{Y_\bullet}(D)=\left\{(\nu_{Y_\bullet}(s),m)\in \mathbb{N}^n\times \mathbb{N}\;|\;0\neq s \in \h^0(X,\mathcal{O}_X(mD) \right\}. $$
Finally, we define the {\it Newton-Okounkov body of $D$ with respect to $Y_\bullet$} to be 
$$\Delta_{Y_\bullet}(D)=\operatorname{cone}(\Gamma_{Y_\bullet}(D))\cap (\mathbb{R}^n\times \{1\}), $$
where $\operatorname{cone}(\Gamma_{Y_\bullet}(D))$ denotes the closed convex cone in $\mathbb{R}^n \times \mathbb{R}$ spanned by $\Gamma_{Y_\bullet}(D)$.

These convex sets $\Delta_{Y_\bullet}(D)$ are compact and they have non-empty interior whenever $D$ is big. Moreover, by \cite[Theo. A]{LM09}, we have the following identity
$$\vol_{\mathbb{R}^n}(\Delta_{Y_\bullet}(D))=\dfrac{1}{n!}\cdot \vol_X(D), $$
where $\vol_X(D)=\lim_{m\to \infty}\frac{h^0(X,\mathcal{O}_X(mD))}{m^n/n!}$. In particular, if $D$ is big and nef, then $\vol_{\mathbb{R}^n}(\Delta_{Y_\bullet}(D))=\frac{1}{n!}D^n$, by the Asymptotic Riemann-Roch theorem.

The Newton-Okounkov bodies of big divisors depend only on numerical classes: if $D\equiv_{\text{num}} D'$ are big divisors then $\Delta_{Y_\bullet}(D)=\Delta_{Y_\bullet}(D')$ for every admissible flag $Y_\bullet$ on $X$, by \cite[Prop. 4.1]{LM09} (see \cite[Theo. A]{Jow10} for the converse). This fact, along with the identity $\Delta_{Y_\bullet}(pD)=p\cdot \Delta_{Y_\bullet}(D)$ for every positive integer $p$, enables us to define an Newton-Okounkov body $\Delta_{Y_\bullet}(\eta)\subseteq \mathbb{R}^n$ for every big rational class $\eta\in \Bg(X)\cap \N^1(X)_\mathbb{Q}$. Moreover, by \cite[Theo. B]{LM09}, there exists a {\it global Newton-Okounkov body}: a closed convex cone
$$\Delta_{Y_\bullet}(X)\subseteq \mathbb{R}^n\times \N^1(X)_\mathbb{R} $$
such that for each big rational class $\eta \in \Bg(X)_\mathbb{Q}=\Bg(X)\cap \N^1(X)_\mathbb{Q}$ the fiber of the second projection over $\eta$ is $\Delta_{Y_\bullet}(\eta)$. This enables us to define Newton-Okounkov bodies for big real classes by continuity.

The above construction works for {\it graded linear series} $A_\bullet$ associated to a big divisor $D$ on $X$. A graded linear series is a collection of subspaces $A_m\subseteq \h^0(X,\mathcal{O}_X(mD))$ such that $A_{\bullet}=\oplus_{m\geq 0}A_m$ is a graded subalgebra of the section ring ${R(D)=\oplus_{m\geq 0} \h^0(X,\mathcal{O}_X(mD))}$. The construction enables us to attach to any graded linear series $A_\bullet$ a closed and convex set $\Delta_{Y_\bullet}(A_\bullet)\subseteq \mathbb{R}^n$. This set $\Delta_{Y_\bullet}(A_\bullet)$ will be compact and will compute the volume of the linear series under some mild conditions listed in \cite[$\S$ 2.3]{LM09}. We will be specially interested on {\it restricted complete linear series} of a big divisor $D$, namely graded linear series of the form
$$A_m = \h^0(X|F,\mathcal{O}_X(mD))=\text{Im}\left(\h^0(X,\mathcal{O}_X(mD))\xrightarrow{\operatorname{rest}} \h^0(F,\mathcal{O}_F(mD)) \right) $$
where $F\subseteq X$ is an irreducible subvariety of dimension $d\geq 1$. Under the hypothesis that $F\not\subseteq \mathbf{B}_+(D)$, the conditions listed in \cite[$\S$ 2.3]{LM09} are satisfied by \cite[Lemm. 2.16]{LM09}. Therefore, the Newton-Okounkov body associated to $A_\bullet$ above, the {\it restricted Newton-Okounkov body} (with respect to a fixed admissible flag)
$$\Delta_{X|F}(D)\subseteq \mathbb{R}^d, $$
is compact and 
$$\vol_{\mathbb{R}^d}(\Delta_{X|F}(D)) = \frac{1}{d!}\vol_{X|F}(D),$$
\noindent where
$$\vol_{X|F}(D)= \lim_{m\to \infty} \frac{\dim_k A_m}{m^d/d!}$$
\noindent is the {\it restricted volume} on $F$ of the divisor $D$. In particular, if $D$ is big and nef, then ${\vol_{X|F}(D)=(D^d\cdot F)}$, by \cite[Cor. 2.17]{ELMNP09}. Restricted Newton-Okounkov bodies depend only on numerical classes (see \cite[Rema. 4.25]{LM09}), so it is meaningful to consider $\Delta_{X|F}(\eta)$ for every big rational class $\eta$ such that $F\not\subseteq \mathbf{B}_+(\eta)$. 

As before, there exists a global Newton-Okounkov body $\Delta_{Y_\bullet}(X|F)$ that enables us to define, by continuity, $\Delta_{X|F}(\eta)$ for any big real numerical class $\eta$ such that $F\not\subseteq \mathbf{B}_+(\eta)$. See \cite[Exam. 4.24]{LM09} for details. 

Restricted Newton-Okounkov bodies can be used to describe slices of Newton-Okounkov bodies.

\begin{thm}[{\cite[Theo. 4.26, Cor. 4.27]{LM09}}]\label{slices} \noindent Let $X$ be a normal projective variety of dimension $n$, and let $F\subseteq X$ be an irreducible and reduced Cartier divisor on $X$. Fix an admissible flag
 
 $$Y_\bullet: X = Y_0 \supseteq Y_1 \supseteq Y_2 \supseteq \cdots \supseteq Y_{n-1} \supseteq Y_n = \{p\} $$
 
\noindent with divisorial component $Y_1=F$. Let $\eta\in \Bg(X)_\mathbb{Q}$ be a rational big class, and consider the Newton-Okounkov body $\Delta_{Y_\bullet}(\eta)\subseteq \mathbb{R}^n$. Write $\operatorname{pr}_1:\Delta_{Y_\bullet}(\eta) \to \mathbb{R}$ for the projection onto the first coordinate, and set
\begin{equation*}
 \begin{aligned}
  \Delta_{Y_\bullet}(\eta)_{\nu_1=t}&=\operatorname{pr}_1^{-1}(t)\subseteq \{t\}\times \mathbb{R}^{n-1}\\
  \Delta_{Y_\bullet}(\eta)_{\nu_1\geq t}&=\operatorname{pr}_1^{-1}\left([t,+\infty) \right)\subseteq \mathbb{R}^n  
 \end{aligned}
\end{equation*}
Assume that $F\not\subseteq \mathbf{B}_+(\eta)$ and let 
$$\tau_F(\eta)=\sup\{s>0\;|\;\eta - s\cdot f \in \Bg(X)\}, $$
\noindent where $f\in \N^1(X)$ is the numerical class of $F$. Then, for any $t\in \mathbb{R}$ with $0\leq t < \tau_F(\eta)$ we have
\begin{enumerate}
 \item $\Delta_{Y_\bullet}(\eta)_{\nu_1\geq t}=\Delta_{Y_\bullet}(\eta-tf)+t\cdot \vec{e}_1$, where $\vec{e}_1 = (1,0,\ldots,0)\in \mathbb{N}^n$ is the first standard unit vector. \footnote{In fact, this statement remains true even if we do not assume that $E\not\subseteq \mathbf{B}_+(\eta)$. See \cite[Prop. 1.6]{KL15b}.}
 \item $\Delta_{Y_\bullet}(\eta)_{\nu_1=t}=\Delta_{X|F}(\eta-tf)$.
 \item The function $t\mapsto \vol_X(\eta+tf)$ is differentiable at $t=0$, and 
 $$\dfrac{d}{dt}\left(\vol_X(\eta+tf) \right)|_{t=0}= n\cdot \vol_{X|F}(\eta). $$
\end{enumerate}

\end{thm}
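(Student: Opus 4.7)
The plan is to establish part (1) first by a direct computation at the level of the graded semigroup, then deduce (2) by combining (1) with the restricted linear series construction, and finally derive (3) from (1) and (2) via Fubini. Fix a divisor $D$ representing $\eta$ and a section $s_F\in H^0(X,\mathcal{O}_X(F))$ cutting out $F$. For integers $k,m\geq 0$, multiplication by $s_F^k$ yields an injection $H^0(X,\mathcal{O}_X(mD-kF))\hookrightarrow H^0(X,\mathcal{O}_X(mD))$ whose image is exactly the subspace of sections $s$ with $\nu_1(s)\geq k$. Because $Y_2,\ldots,Y_n$ lie inside $Y_1=F$ and $s_F$ is a local equation of $F$ near $p$, this multiplication shifts $\nu_1$ by $k$ while leaving $\nu_2,\ldots,\nu_n$ untouched. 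For rational $t=k/m$ (after choosing $m$ divisible enough) this gives a bijection between $\Gamma_{Y_\bullet}(D)\cap\{\nu_1\geq tm\}$ and $\Gamma_{Y_\bullet}(D-tF)$ up to the shift $(tm,0,\ldots,0)$. Taking the closed convex cone, slicing at height $1$, and invoking the numerical invariance of Newton-Okounkov bodies gives (1) for rational $t$; the general real case then follows by continuity of the global body $\Delta_{Y_\bullet}(X)$.

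For (2), by (1) the slice $\Delta_{Y_\bullet}(\eta)_{\nu_1=t}$ is the translate by $t\vec{e}_1$ of $\Delta_{Y_\bullet}(\eta-tf)_{\nu_1=0}$. The sections $\sigma\in H^0(X,\mathcal{O}_X(m(D-tF)))$ with $\nu_1(\sigma)=0$ are precisely those not vanishing identically along $F$, and their restrictions to $F$ span the restricted linear series $H^0(X|F,\mathcal{O}_X(m(D-tF)))$. The inductive definition of $\nu_{Y_\bullet}$ shows directly that $(\nu_2(\sigma),\ldots,\nu_n(\sigma))$ coincides with the Okounkov valuation of $\sigma|_F$ with respect to the admissible flag $Y_2\supseteq\cdots\supseteq Y_n$ on $F$. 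The assumption $F\not\subseteq\mathbf{B}_+(\eta-tf)$, which holds for $0\leq t<\tau_F(\eta)$, guarantees via \cite[Lemm. 2.16]{LM09} that the restricted semigroup is large enough to compute $\Delta_{X|F}(\eta-tf)$ as its Newton-Okounkov body.

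For (3) I would combine Fubini with parts (1) and (2). Applying (1) to the class $\eta+tf$, which lies in the same open locus for small $t>0$ by openness of the big cone and semicontinuity of the augmented base locus, the region $\{\nu_1\geq t\}\subseteq \Delta_{Y_\bullet}(\eta+tf)$ is exactly a translate of $\Delta_{Y_\bullet}(\eta)$. Hence
$$
\vol_X(\eta+tf)-\vol_X(\eta)=n!\int_0^t \vol_{\mathbb{R}^{n-1}}\bigl(\Delta_{Y_\bullet}(\eta+tf)_{\nu_1=s}\bigr)\,ds,
$$
and by (2) each slice equals $\Delta_{X|F}(\eta+(t-s)f)$, with $(n-1)$-dimensional volume $\vol_{X|F}(\eta+(t-s)f)/(n-1)!$. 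Dividing by $t$, letting $t\to 0^+$, and using continuity of the restricted volume on the open locus $\{\eta':F\not\subseteq \mathbf{B}_+(\eta')\}$ yields the stated derivative $n\cdot\vol_{X|F}(\eta)$.

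The main obstacle is the careful passage between the integral semigroup picture and the rational/real convex geometry of Newton-Okounkov bodies: one must verify that $\operatorname{cone}(\Gamma_{Y_\bullet}(D))$ is insensitive to the auxiliary integer $m$ used to clear denominators of $t$, and that the $\nu_1$-shift at the semigroup level descends cleanly to a translation of the rescaled body. A secondary subtlety is checking that $F\not\subseteq\mathbf{B}_+(\eta+sf)$ holds in a neighborhood of $s=0$, so that (1) can be applied to $\eta+tf$ and the restricted volume function in (3) is continuous at $\eta$; both facts follow from general properties of $\mathbf{B}_+$ in \cite{ELMNP06,ELMNP09}.
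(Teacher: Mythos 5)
This statement is not proved in the paper at all: it is quoted verbatim from Lazarsfeld--Musta\c{t}\u{a} \cite[Theo. 4.26, Cor. 4.27]{LM09}, so your proposal can only be measured against the argument in that source, whose overall strategy (the shift by multiplication with $s_F^k$ at the level of graded semigroups, identification of the $\nu_1=t$ slice with a restricted body, and a Fubini argument for the derivative of the volume) your outline does follow.

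As a proof, however, your sketch has a genuine gap at the step that is the actual heart of the matter, namely (2). Matching valuation vectors only shows that the Okounkov semigroup of the restricted series $\h^0(X|F,\mathcal{O}_X(m(D-tF)))$ coincides with the part of $\Gamma_{Y_\bullet}(D-tF)$ lying in $\{\nu_1=0\}$, and hence gives the inclusion $\Delta_{X|F}(\eta-tf)\subseteq\Delta_{Y_\bullet}(\eta)_{\nu_1=t}$. The reverse inclusion is not formal: the face $\{\nu_1=0\}$ of the closed convex body $\Delta_{Y_\bullet}(\eta-tf)$ may a priori be strictly larger than the closure of the normalized valuation vectors of sections not vanishing along $F$, and \cite[Lemm. 2.16]{LM09} only guarantees that the restricted series satisfies the conditions under which its body computes $\vol_{X|F}$; it does not identify that body with the slice. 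This is exactly where the hypothesis $F\not\subseteq\mathbf{B}_+(\eta)$ and the approximation results for restricted linear series in \cite{LM09} (together with the restricted-volume theory of \cite{ELMNP09}) do real work --- note the paper's footnote that (1), unlike (2), survives without the $\mathbf{B}_+$ assumption. Similarly, in (1) your semigroup bijection yields $\Delta_{Y_\bullet}(\eta-tf)+t\vec{e}_1\subseteq\Delta_{Y_\bullet}(\eta)_{\nu_1\geq t}$ directly, but the opposite inclusion requires the density of normalized valuation points in the body together with continuity of $\Delta_{Y_\bullet}(\eta-sf)$ in $s$, which you only gesture at (this part is fixable along the lines you indicate). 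Finally, in (3) you compute only the one-sided limit $t\to 0^+$; differentiability at $0$ also needs the left-hand derivative, obtained by running the same Fubini argument for $\eta-\epsilon f$, using that $\eta-\epsilon f$ remains big and that $F\not\subseteq\mathbf{B}_+(\eta-\epsilon f)$ for small $\epsilon>0$.
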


We will need the following observation by K\"{u}ronya, Lozovanu and Maclean.

\begin{propo}[{\cite[Prop. 3.1]{KLM12}}]\label{propo:psef=nef} Let $X$ be a normal projective variety together with an admissible flag $Y_\bullet$. Suppose that $D$ is a big $\mathbb{Q}$-divisor such that $Y_1\not\subseteq \mathbf{B}_+(D)$ and that $D-tY_1$ is ample for some $0\leq t < \tau_{Y_1}(D)$, where $ \tau_{Y_1}(D)=\sup\{s>0\;|\;D-sY_1 \text{ is big} \}$. Then
$$\Delta_{Y_\bullet}(X,D)_{\nu_1=t}=\Delta_{Y_\bullet|Y_1}(Y_1,(D-tY_1)|_{Y_1}), $$
where $Y_\bullet|Y_1 : Y_1\supseteq Y_2 \supseteq \cdots \supseteq Y_n$ is the induced admissible flag on $Y_1$.

In particular, if $\Psef(X)=\Nef(X)$ then we have that the Newton-Okounkov body $\Delta_{Y_\bullet}(D)$ is the closure in $\mathbb{R}^n$ of the following set
$$\{(t,\nu_2,\ldots,\nu_n)\in \mathbb{R}^n\;|\;0\leq t < \tau_{Y_1}(D), \;(\nu_2,\ldots,\nu_n)\in \Delta_{Y_\bullet|Y_1}(Y_1,(D-tY_1)|_{Y_1}) \}. $$
\end{propo}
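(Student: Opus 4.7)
The plan is to combine Theorem \ref{slices}(2), which slices a Newton-Okounkov body at a fixed first coordinate, with the observation that ampleness forces the restricted linear series on $Y_1$ to be genuinely complete, so that restricted Newton-Okounkov bodies collapse to ordinary ones computed on $Y_1$.

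First, I would apply Theorem \ref{slices}(2) with $\eta=D$ and $F=Y_1$. The hypothesis $Y_1\not\subseteq \mathbf{B}_+(D)$ is given, and the conclusion reads
$$\Delta_{Y_\bullet}(X,D)_{\nu_1=t}=\Delta_{X|Y_1}(D-tY_1)$$
for every $0\leq t<\tau_{Y_1}(D)$. The remaining key point is to identify this restricted body with $\Delta_{Y_\bullet|Y_1}\bigl(Y_1,(D-tY_1)|_{Y_1}\bigr)$ under the ampleness hypothesis. Writing $t=a/p$ with $p(D-tY_1)$ an integral ample Cartier divisor, Serre vanishing yields $H^1\bigl(X,\mathcal{O}_X(mp(D-tY_1)-Y_1)\bigr)=0$ for all $m\gg 0$, so that the restriction sequence
$$0\to\mathcal{O}_X\bigl(mp(D-tY_1)-Y_1\bigr)\to\mathcal{O}_X\bigl(mp(D-tY_1)\bigr)\to\mathcal{O}_{Y_1}\bigl(mp(D-tY_1)|_{Y_1}\bigr)\to 0$$
gives surjectivity of the restriction on global sections in every sufficiently divisible degree. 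Therefore the graded linear series defining $\Delta_{X|Y_1}(D-tY_1)$ agrees in high degree with the complete section ring of $(D-tY_1)|_{Y_1}$; since the Newton-Okounkov body is extracted from the graded semigroup only through its closed convex cone intersected with $\{m=1\}$, the two bodies coincide.

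For the ``in particular'' statement, the assumption $\Psef(X)=\Nef(X)$ implies $\Bg(X)=\operatorname{int}\Psef(X)=\Amp(X)$, so ``big'' and ``ample'' coincide for real classes on $X$. Hence $D-tY_1$ is ample for every $0\leq t<\tau_{Y_1}(D)$, the first part applies at each such $t$, and the Newton-Okounkov body $\Delta_{Y_\bullet}(D)$, being the disjoint union of its $\nu_1$-slices, is described on the open range by the claimed formula; taking the closure in $\mathbb{R}^n$ extends this description to the entire compact convex body.

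The main technical obstacle I anticipate is the bookkeeping for non-integral rational values of $t$, and in particular the verification that the valuation $\nu_{Y_\bullet|Y_1}$ induced on $Y_1$ by the restricted flag agrees with the last $n-1$ coordinates of $\nu_{Y_\bullet}$ computed on those sections of $mD$ whose first coordinate equals $\lceil mt\rceil$. This should follow directly from the inductive definition of $\nu_{Y_\bullet}$: after stripping the divisorial contribution along $Y_1$, the remaining coordinates are by construction computed against the flag $Y_\bullet|Y_1$.
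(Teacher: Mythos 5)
Your argument is correct and is essentially the standard proof of the cited result \cite[Prop. 3.1]{KLM12}, which this paper quotes without proof: Theorem \ref{slices}(2) identifies the slice $\Delta_{Y_\bullet}(D)_{\nu_1=t}$ with the restricted body $\Delta_{X|Y_1}(D-tY_1)$, and Serre vanishing for the ample class $D-tY_1$ makes the restricted graded series complete in all sufficiently large divisible degrees, which suffices because the Newton--Okounkov body depends only on the tail of the graded semigroup (every low-degree semigroup element has high-degree multiples on the same ray). The only points to tidy are that your surjectivity step requires $t\in\mathbb{Q}$, so for the ``in particular'' statement the irrational slices should be recovered from the rational ones by density before taking the closure, and that when $Y_1$ is not Cartier one should twist the ideal sheaf of $Y_1$ by $mp(D-tY_1)$ rather than write $\mathcal{O}_X\bigl(mp(D-tY_1)-Y_1\bigr)$.
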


Let us finish this section with the case of Newton-Okounkov bodies on surfaces (see \cite[\S 6.2]{LM09} for details). We will use this description in Example \ref{ex:ruled surfaces} in order to illustrate the shape of Newton-Okounkov bodies on ruled surfaces.

\begin{exmp}[Surfaces]\label{ex:surfaces}
Let $S$ be a smooth projective surface together with a flag $Y_\bullet: Y_0=S \supseteq Y_1= C \supseteq Y_2=\{p\}$, where $C\subseteq S$ is a smooth curve and $p\in C$.

Let $D$ be a big $\mathbb{Q}$-divisor on $S$. Any such divisor admits a {\it Zariski decomposition}, that is we can uniquely write $D$ as a sum
$$D=P(D)+N(D) $$
\noindent of $\mathbb{Q}$-divisors, with $P(D)$ nef and $N(D)$ either zero or effective with negative definite intersection matrix. Moreover, $P(D)\cdot \Gamma = 0$ for every irreducible component $\Gamma$ of $N(D)$ and for all $m\geq 0$ there is an isomorphism $\h^0(S,\mathcal{O}_S(\lfloor mP(D) \rfloor)) \cong \h^0(S,\mathcal{O}_S(\lfloor mD \rfloor))$. In this decomposition $P(D)$ is called the {\it positive part} and $N(D)$ the {\it negative part}. See \cite[\S 2.3.E]{Laz04} and references therein for proofs and applications.

With the above notation, we have that
$$\Delta_{Y_\bullet}(D)=\left\{(t,y)\in \mathbb{R}^2\;|\; \nu \leq t \leq \tau_C(D),\; \alpha(t)\leq t \leq \beta(t) \right\} $$
where
\begin{enumerate}
 \item $\nu \in \mathbb{Q}$ the coefficient of $C$ in $N(D)$,
 \item $\tau_C(D)=\sup\{t>0\;|\;D-tC \text{ is big} \}$,
 \item $\alpha(t)=\ord_p(N_t\cdot C)$,
 \item $\beta(t)=\ord_p(N_t\cdot C)+P_t\cdot C$,
\end{enumerate}
where $D-tC=P_t+N_t$ is a Zariski decomposition, $P_t$ being the positive and $N_t$ the negative part. Moreover, these bodies are finite polygons, by \cite[Theo. B]{KLM12}.
\end{exmp}

\subsection{Semi-stability and Harder-Narasimhan filtrations}

Throughout this section, $C$ is a smooth projective curve and $E$ is a locally free sheaf on $C$ of rank $r>0$ and degree $d=\deg(E)=\deg (c_1(E))$. Given such a bundle we call the rational number
$$\mu(E)=\dfrac{d}{r} $$
the {\it slope} of $E$. 

\begin{defn}[Semi-stability]
 Let $E$ be a vector bundle on $C$ of slope $\mu$. We say that $E$ is {\it semi-stable} if for every non-zero sub-bundle $ S \subseteq E$, we have $\mu(S)\leq \mu$. Equivalently, $E$ is semi-stable if for every locally-free quotient $E \twoheadrightarrow Q$ of non-zero rank, we have $\mu \leq \mu(Q)$. A non semi-stable vector bundle will be called unstable.
\end{defn}

Following \cite[Prop. 5.4.2]{LP97}, there is a canonical filtration of $E$ with semi-stable quotients.

\begin{propo}\label{propo: HN filtration}
 Let $E$ be a vector bundle on $C$. Then $E$ has an increasing filtration by sub-bundles 
 $$ \operatorname{HN}_\bullet(E): 0=E_\ell \subseteq E_{\ell-1} \subseteq \cdots \subseteq E_1 \subseteq E_0 = E $$
 where each of the quotients $E_{i-1}/E_i$ satisfies the following conditions:
 \begin{enumerate}
  \item Each quotient $E_{i-1}/E_i$ is a semi-stable vector bundle;
  \item $\mu(E_{i-1}/E_i)<\mu(E_{i}/E_{i+1})$ for $i=1,\ldots,\ell-1$.
 \end{enumerate}
This filtration is unique.
\end{propo}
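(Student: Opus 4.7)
The plan is to establish existence and uniqueness separately, both by induction on $\rk(E)$, built around the notion of the \emph{maximal destabilizing sub-bundle} of $E$.

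\textbf{Existence.} I would first introduce the slope supremum $\mu_{\max}(E):=\sup\{\mu(F)\mid 0\neq F\subseteq E\text{ a sub-bundle}\}$ and show it is finite and attained. Finiteness is the main technical input: degrees of sub-bundles of fixed rank in $E$ are bounded above, which can be proved by embedding $E$ into a sufficiently positive split bundle and controlling the degrees of the resulting sub-sheaves there, or via Grothendieck's boundedness for the Quot scheme on $C$; attainment then follows because degrees are integer-valued. Among sub-bundles realizing $\mu_{\max}(E)$ I pick one, $F$, of maximal rank. Then $F$ is automatically semi-stable (any destabilizing sub-bundle would violate the definition of $\mu_{\max}$), and it is unique with these properties: if $G\subseteq E$ also achieves $\mu_{\max}(E)$, the inequality $\deg(F+G)+\deg(F\cap G)\geq \deg F+\deg G$ extracted from the exact sequence $0\to F\cap G\to F\oplus G\to F+G\to 0$, combined with the maximality of $\mu_{\max}$ and of $\rk F$, forces $G\subseteq F$. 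Setting $E_{\ell-1}:=F$ and applying the inductive hypothesis to $E/F$ (of strictly smaller rank), one pulls back the resulting filtration through $E\twoheadrightarrow E/F$ to obtain the desired increasing filtration. Condition (1) is then immediate. For condition (2), if some sub-bundle $H/E_i\subseteq E/E_i$ satisfied $\mu(H/E_i)\geq \mu(E_i/E_{i+1})$, an elementary slope computation on $0\to E_i\to H\to H/E_i\to 0$ would produce a sub-bundle of $E/E_{i+1}$ violating the maximal destabilizing property of $E_i/E_{i+1}$ inside $E/E_{i+1}$.

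\textbf{Uniqueness.} Let $0=E'_{\ell'}\subsetneq\cdots\subsetneq E'_0=E$ be another such filtration. The heart of the argument is to prove $E'_{\ell'-1}=E_{\ell-1}$; the induction hypothesis applied to $E/E_{\ell-1}$ then concludes. For any sub-bundle $F\subseteq E$, a decreasing induction on $i$ --- using the projections to the semi-stable quotients $E'_{i-1}/E'_i$ together with the standard fact that any non-zero morphism $U\to V$ between semi-stable bundles satisfies $\mu(U)\leq \mu(V)$ --- yields $\mu(F)\leq \mu(E'_{\ell'-1})$, so $\mu(E'_{\ell'-1})=\mu_{\max}(E)$. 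A refinement of this comparison together with the rank maximality in the definition of $E_{\ell-1}$ shows $E'_{\ell'-1}=E_{\ell-1}$.

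\textbf{Main obstacle.} I expect the most delicate step to be the boundedness of slopes of sub-bundles of $E$, i.e.\ the finiteness of $\mu_{\max}(E)$. This is precisely the input that can fail in positive characteristic without additional Frobenius-stability hypotheses, as noted in the footnote of Section~2. Once it is established, the rest of the proof is a formal exercise with slope inequalities and the semi-stability hypothesis.
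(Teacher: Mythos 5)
Your proof is correct and follows essentially the same route as the source the paper cites for this statement (Le Potier, Prop.\ 5.4.2) -- existence via the maximal destabilizing sub-bundle of maximal rank and induction on the rank, uniqueness via vanishing of morphisms from a semi-stable bundle to a semi-stable bundle of smaller slope -- the paper itself giving no independent proof. The only inaccuracy is your closing aside: finiteness of $\mu_{\max}(E)$ and the existence and uniqueness of the Harder--Narasimhan filtration hold in every characteristic, and the footnote in Section 2 refers instead to the failure of semi-stability of symmetric powers of semi-stable bundles in positive characteristic, which plays no role in this proposition.
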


The above filtration is called the {\it Harder-Narasimhan filtration} of $E$. 

\begin{notation}\label{notation: HN quotients}
Let $E$ be a vector bundle on a smooth projective curve $C$. We will denote by $Q_i=E_{i-1}/E_i$ the semi-stable quotients of the Harder-Narasimhan filtration of $E$, each one of rank $r_i=\rk(Q_i)$, degree $d_i=\deg(c_1(Q_i))$ and slope $\mu_i=\mu(Q_i)=d_i/r_i$. With this notation, $\mu_1$ and $\mu_\ell$ correspond to the minimal and maximal slopes, $\mu_{\min}(E)$ and $\mu_{\max}(E)$, respectively.
\end{notation}

From a cohomological point of view, semi-stable vector bundles can be seen as the good higher-rank analogue of line bundles. For instance, we have the following classical properties (see \cite{RR84} or \cite[Lemm. 1.12, Lemm. 2.5]{But94}). 

\begin{lemma}\label{lemma: properties slope}
 Let $E$ and $F$ be vector bundles on $C$ and $m\in \mathbb{N}$. Then,
 \begin{enumerate}
  \item $\mu_{\max}(E\otimes F)=\mu_{\max}(E)+\mu_{\max}(F)$.
  \item $\mu_{\min}(E\otimes F)=\mu_{\min}(E)+\mu_{\min}(F)$.
  \item $\mu_{\max}(S^m E)=m \mu_{\max}(E)$.
  \item $\mu_{\min}(S^m E)=m \mu_{\min}(E)$.
  \item If $\mu_{\max}(E)<0$, then $\dim_k \h^0(C,E)=0$.
  \item If $\mu_{\min}(E)>2g-2$, then $\dim_k \h^1(C,E)=0$.
 \end{enumerate}
 In particular, if $E$ and $F$ are semi-stable then $S^m E$ and $E\otimes F$ are semi-stable.
\end{lemma}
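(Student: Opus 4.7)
My plan is to leverage the fundamental fact that in characteristic zero the tensor product of two semi-stable vector bundles on $C$ is again semi-stable (theorem of Narasimhan--Seshadri/Ramanan--Ramanathan, see \cite{RR84}); granted this one deep input, all six assertions follow by elementary Harder--Narasimhan bookkeeping plus Serre duality. The concluding ``in particular'' statement is then an immediate corollary of (1)--(4): if $E, F$ are semi-stable then $\mu_{\max}(E) = \mu(E) = \mu_{\min}(E)$ and likewise for $F$, so (1)--(2) force $\mu_{\max}(E \otimes F) = \mu_{\min}(E \otimes F)$, and (3)--(4) likewise imply semi-stability of $S^m E$.

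For (1), I would first invoke Proposition \ref{propo: HN filtration} to get Harder--Narasimhan filtrations of $E$ and $F$ with semi-stable quotients $Q_i$, $Q'_j$ of slopes $\mu_i$, $\mu'_j$. Tensoring the filtration of $E$ by $F$, and then each step $Q_i \otimes F$ by the filtration of $F$, exhibits $E \otimes F$ as an iterated extension of the bundles $Q_i \otimes Q'_j$; by the key fact these are semi-stable of slope $\mu_i + \mu'_j$. The elementary inequality $\mu_{\max}(B) \leq \max\{\mu_{\max}(A), \mu_{\max}(C)\}$ for a short exact sequence $0 \to A \to B \to C \to 0$ propagates up the filtration to give $\mu_{\max}(E \otimes F) \leq \mu_{\max}(E) + \mu_{\max}(F)$. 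The reverse inequality is witnessed by the semi-stable sub-bundle $E_{\ell-1} \otimes F_{\ell'-1} \hookrightarrow E \otimes F$, whose slope is precisely $\mu_{\max}(E) + \mu_{\max}(F)$. Assertion (2) then follows by dualization, via $\mu_{\min}(V) = -\mu_{\max}(V^{\vee})$ and $(E \otimes F)^{\vee} \cong E^{\vee} \otimes F^{\vee}$.

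Parts (3) and (4) reduce to (1)--(2): in characteristic zero $S^m E$ is a direct summand of $E^{\otimes m}$, so iterating (1) yields $\mu_{\max}(S^m E) \leq \mu_{\max}(E^{\otimes m}) = m\,\mu_{\max}(E)$, and the reverse inequality is realized by the semi-stable sub-bundle $S^m E_{\ell-1} \subseteq S^m E$ of slope $m\,\mu_{\max}(E)$. Part (4) is symmetric, using the semi-stable quotient $S^m Q_1 = S^m(E/E_1)$ of $S^m E$ together with (2).

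For (5), the Harder--Narasimhan filtration presents $E$ as an iterated extension of semi-stable bundles $Q_i$ with $\mu(Q_i) \leq \mu_{\max}(E) < 0$; any nonzero section of a semi-stable bundle $Q$ with $\mu(Q) < 0$ would produce a line sub-bundle $\mathcal{O}_C \hookrightarrow Q$ of non-negative slope, contradicting semi-stability, so $\h^0(C, Q_i) = 0$ for each $i$ and the long exact sequences give $\h^0(C, E) = 0$. Finally (6) follows from Serre duality $\h^1(C, E) \cong \h^0(C, E^{\vee} \otimes \omega_C)^{\vee}$: the hypothesis $\mu_{\min}(E) > 2g-2$ combined with (1) gives $\mu_{\max}(E^{\vee} \otimes \omega_C) = -\mu_{\min}(E) + (2g-2) < 0$, so (5) applies. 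The one genuinely non-trivial ingredient in the whole argument is the characteristic-zero preservation of semi-stability under tensor product, which is precisely the point flagged in the paper's own footnote on positive characteristic; once that is accepted, everything else is routine manipulation of the Harder--Narasimhan filtration.
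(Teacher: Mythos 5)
Your proof is correct. The paper does not actually prove this lemma—it only cites \cite{RR84} and \cite[Lemm. 1.12, Lemm. 2.5]{But94}—and your argument (Harder--Narasimhan bookkeeping combined with the characteristic-zero theorem that tensor products of semi-stable bundles are semi-stable, plus Serre duality for (6)) is precisely the standard argument underlying those citations; the only nitpick is that in (5) a nonzero section gives a line \emph{subsheaf} $\mathcal{O}_C\hookrightarrow Q$ of slope $0$ (one should pass to its saturation, or note that semi-stability may equivalently be tested on subsheaves), rather than a sub-bundle.
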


If $E_1,\ldots, E_\ell$ are vector bundles on $C$ and $m_1,\ldots,m_\ell$ be non-negative integers. By the splitting principle \cite[Rema. 3.2.3]{Ful84} we can prove the following formula:
$$ \mu(S^{m_1}E_1 \otimes \cdots \otimes S^{m_\ell} E_\ell)=\sum_{i=1}^\ell m_i \mu(E_i).$$

Moreover, we have that for every $m\geq 1$ the Harder-Narasimhan filtration of the symmetric product $S^m E$ can be computed in terms of the one for $E$ (see \cite[Prop. 3.4]{Che08} and \cite[Prop. 5.10]{Wol05}, for instance).

\begin{propo}\label{propo: HN filtration of S^mE}
 Let $E$ be a vector bundle on $C$ with Harder-Narasimhan filtration
 $$\operatorname{HN}_\bullet(E): 0=E_\ell \subseteq E_{\ell-1} \subseteq \cdots \subseteq E_1 \subseteq E_0 = E $$
 and semi-stable quotients $Q_i=E_{i-1}/E_i$ with slopes $\mu_i=\mu(Q_i)$, for $i=1,\ldots,\ell$.
 For every positive integer $m\geq 1$, let us consider the vector bundle $S^m E$ with Harder-Narasimhan filtration
 $$\operatorname{HN}_\bullet(S^mE): 0=W_M \subseteq W_{M-1} \subseteq \cdots \subseteq W_1 \subseteq W_0 = S^m E  $$
 and semi-stable quotients $W_{j-1}/W_j$ with slopes $\nu_j=\mu(W_{j-1}/W_j)$, for ${j=1,\ldots,M}$. Then, for every $j=1,\ldots,M$ we have that 
$$W_j=\sum_{{\sum_i m_i\mu_i \geq \nu_{j+1}}}S^{m_1}E_0\otimes \cdots\otimes S^{m_\ell}E_{\ell-1} $$
and 
 $$W_{j-1}/W_j \cong \bigoplus_{\sum_i m_i \mu_i = \nu_j} S^{m_1}Q_1\otimes \cdots \otimes S^{m_\ell} Q_\ell, $$
 where the sums are taken over all partitions $\mathbf{m}=(m_1,\ldots,m_\ell)\in \mathbb{N}^\ell$ of $m$, and
 $S^{m_1}E_0\otimes \cdots \otimes S^{m_\ell}E_{\ell-1}$ denotes the image of the composite
 $$E_0^{\otimes m_1}\otimes \cdots \otimes E_{\ell-1}^{\otimes m_\ell}\to E^{\otimes m} \to S^m E. $$ 
 
In particular, there is a refinement $F_\bullet$ of $\operatorname{HN}_\bullet(S^m E)$ of length $L=L(m)$ and whose respective successive quotients are of the form 
 $$F_{i-1}/F_i\cong Q_{\mathbf{m}(i)}=S^{m_1}Q_1\otimes \cdots \otimes S^{m_\ell}Q_\ell $$
for some partition $\mathbf{m}(i)=(m_1,\ldots,m_\ell)\in \mathbb{N}^\ell$ of $m$, and such that for every $i\in \{1,\ldots L\}$ we have $\mu(Q_{\mathbf{m}(i)})\leq \mu( Q_{\mathbf{m}(i+1)})$. Moreover, given any partition $\mathbf{m}\in \mathbb{N}^\ell$ of $m$, there is one and only one $i\in \{1,\ldots,L\}$ such that $\mathbf{m}(i)=\mathbf{m}$.
\end{propo}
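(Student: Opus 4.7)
The plan is to exhibit a natural decreasing filtration on $S^m E$ whose successive quotients are visibly semi-stable with strictly increasing slopes, and then to invoke the uniqueness in Proposition \ref{propo: HN filtration} to identify it with $\operatorname{HN}_\bullet(S^m E)$. For every partition $\mathbf{m}=(m_1,\ldots,m_\ell)$ of $m$, let $V_{\mathbf{m}} \subseteq S^m E$ denote the subsheaf $S^{m_1}E_0\otimes\cdots\otimes S^{m_\ell}E_{\ell-1}$ defined in the statement. Listing the distinct values attained by $\sum_i m_i\mu_i$ as $\nu_1<\nu_2<\cdots<\nu_M$ (with $\nu_{M+1}=+\infty$ by convention), I would set
$$W_j=\sum_{\sum_i m_i\mu_i\geq \nu_{j+1}} V_{\mathbf{m}},\qquad j=0,1,\ldots,M,$$
yielding a decreasing chain with $W_0=S^m E$ (because $V_{(m,0,\ldots,0)}=S^m E$) and $W_M=0$.

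The heart of the argument is the identification
$$W_{j-1}/W_j \;\cong\; \bigoplus_{\sum_i m_i\mu_i=\nu_j} S^{m_1}Q_1\otimes\cdots\otimes S^{m_\ell}Q_\ell. \qquad (\star)$$
To prove $(\star)$, I would fix any total order $\prec$ on the set of partitions $\mathbf{m}$ of $m$ refining the partial order induced by $\sum_i m_i\mu_i$, refine the chain $W_\bullet$ by inserting the $V_{\mathbf{m}}$ one by one in that order, and compute each successive quotient $V_{\mathbf{m}}/\sum_{\mathbf{m}'\succ \mathbf{m}} V_{\mathbf{m}'}$ by iterating the short exact sequences $0\to E_i\to E_{i-1}\to Q_i\to 0$ and using right-exactness of symmetric and tensor products. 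Locally on $C$ the filtration $E_\bullet$ splits, and the identification then reduces to the classical multinomial decomposition of the symmetric power of a direct sum of free modules; canonicity of the maps involved makes this local picture globalize.

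Once $(\star)$ is in hand, Lemma \ref{lemma: properties slope} concludes the proof: in characteristic zero each $S^{m_i}Q_i$ is semi-stable of slope $m_i\mu_i$, the tensor product $S^{m_1}Q_1\otimes\cdots\otimes S^{m_\ell}Q_\ell$ is then semi-stable of slope $\sum_i m_i\mu_i=\nu_j$, and a finite direct sum of semi-stable bundles of the same slope is semi-stable. Hence $W_{j-1}/W_j$ is semi-stable of slope $\nu_j$ with $\nu_1<\cdots<\nu_M$, so the uniqueness clause of Proposition \ref{propo: HN filtration} identifies $W_\bullet$ with $\operatorname{HN}_\bullet(S^m E)$. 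For the refinement $F_\bullet$, I would take the $V_{\mathbf{m}}$'s individually as the steps (in the chosen total order) rather than grouping them into slope strata; the same computation underlying $(\star)$ shows that each successive quotient is $S^{m_1}Q_1\otimes\cdots\otimes S^{m_\ell}Q_\ell$ for exactly one partition $\mathbf{m}$, and every partition appears once by construction. The main obstacle, concentrated in $(\star)$, is showing that the subsheaves $V_{\mathbf{m}}$ of $S^m E$ intersect correctly; this is non-trivial precisely because $E_\bullet$ need not split globally, and it is resolved by the local-trivialization argument sketched above together with the canonicity of all the maps involved.
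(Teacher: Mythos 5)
The paper itself gives no proof of this proposition: it quotes the result from \cite[Prop. 3.4]{Che08} and \cite[Prop. 5.10]{Wol05}, and your sketch follows essentially the same standard route those references take — build a candidate filtration from the images $V_{\mathbf{m}}$, identify its graded pieces, use that in characteristic zero each $S^{m_1}Q_1\otimes\cdots\otimes S^{m_\ell}Q_\ell$ is semi-stable of slope $\sum_i m_i\mu_i$ (Lemma \ref{lemma: properties slope}), so the quotients are semi-stable with strictly increasing slopes, and invoke the uniqueness in Proposition \ref{propo: HN filtration}. The skeleton is correct, and the point that strict dominance of exponent vectors forces a strictly larger value of $\sum_i m_i\mu_i$ (because $\mu_1<\cdots<\mu_\ell$) is exactly what makes each step of your refined chain contribute one and only one partition.

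The step you should not leave to the word ``canonicity'' is the global direct-sum decomposition $(\star)$. The local splitting of $E_\bullet$ plus the multinomial decomposition only shows that $W_{j-1}/W_j$ is \emph{locally} isomorphic to the direct sum, and your refined chain exhibits it globally merely as an iterated extension of the bundles $Q_{\mathbf{m}}$; since extensions of semi-stable bundles of equal slope need not split, this is not yet $(\star)$. The missing (short) argument is: for each $\mathbf{m}$ with $\sum_i m_i\mu_i=\nu_j$, the composite $E_0^{\otimes m_1}\otimes\cdots\otimes E_{\ell-1}^{\otimes m_\ell}\to S^mE\to W_{j-1}/W_j$ annihilates every subsheaf obtained by replacing one factor of the $k$-th block $E_{k-1}^{\otimes m_k}$ by $E_k$, because such a subsheaf maps into $V_{\mathbf{m}'}$ with $\sum_i m_i'\mu_i>\nu_j$, hence $\geq\nu_{j+1}$, hence into $W_j$; therefore the composite factors through $S^{m_1}Q_1\otimes\cdots\otimes S^{m_\ell}Q_\ell$, and the induced map from $\bigoplus_{\sum_i m_i\mu_i=\nu_j}S^{m_1}Q_1\otimes\cdots\otimes S^{m_\ell}Q_\ell$ to $W_{j-1}/W_j$ is surjective between locally free sheaves of the same rank (your local computation), hence an isomorphism. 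Note that for identifying $W_\bullet$ with $\operatorname{HN}_\bullet(S^mE)$ the iterated-extension structure already suffices (an extension of semi-stable bundles of the same slope is semi-stable), so only the displayed isomorphism needs this extra care; with it spelled out, your proof is complete.
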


\section{Divisors on projective bundles over curves}

Let $E$ be a vector bundle on a smooth projective curve $C$, of rank $r\geq 2$ and degree $d$. In this section we study divisors on the projective bundle $\pi:\mathbb{P}(E)\to C$ of one-dimensional quotients.  Let us recall that in this case the N\'eron-Severi group of $\mathbb{P}(E)$ is of the form

$$\N^1(\mathbb{P}(E))=\mathbb{Z}\cdot f \oplus \mathbb{Z}\cdot \xi, $$

\noindent where $f$ is the numerical class of a fiber of $\pi$ and $\xi=\xi_E$ is the numerical class of a divisor representing the tautological line bundle $\mathcal{O}_{\mathbb{P}(E)}(1)$. Moreover, if $[\text{pt}]$ denotes the class of a point in the ring $\N^*(\mathbb{P}(E))$ then we have the following relations:
$$f^2 = 0,\; \xi^{r-1}f = [\text{pt}], \; \xi^r =  d\cdot [\text{pt}].  $$

The cone of nef divisors can be described via Hartshorne's characterization of ample vector bundles over curves \cite[Theo. 2.4]{Ha71} (cf. \cite[Lemm. 2.1]{Ful11}). 

\begin{lemma}\label{lemma: nef cone} $\Nef(\mathbb{P}(E))=\langle \xi - \mu_{\min} f,f \rangle. $
\end{lemma}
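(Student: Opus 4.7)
The plan is to describe the two-dimensional closed convex cone $\Nef(\mathbb{P}(E))\subset \N^{1}(\mathbb{P}(E))_{\mathbb{R}}=\mathbb{R}\xi\oplus\mathbb{R}f$ by identifying its two extremal rays. The class $f=\pi^{\ast}[\mathrm{pt}]$ is the pullback of an ample class on $C$, hence nef, and one extremal ray of $\Nef(\mathbb{P}(E))$ will be $\mathbb{R}_{\ge 0}f$. The whole statement therefore reduces to showing that for $t\in \mathbb{R}$ the class $\xi-tf$ is nef if and only if $t\le \mu_{\min}(E)$.

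The main tool is Hartshorne's theorem \cite[Theo.~2.4]{Ha71}, in its nef version: a vector bundle $F$ on $C$ is nef, i.e.\ $\mathcal{O}_{\mathbb{P}(F)}(1)$ is nef, if and only if $\mu_{\min}(F)\ge 0$. I combine it with the standard twist identity $\mathbb{P}(F\otimes L)\cong \mathbb{P}(F)$, under which $\xi_{F\otimes L}$ corresponds numerically to $\xi_F+(\deg L)\,f$. For the inclusion $\langle \xi-\mu_{\min}(E)f,\,f\rangle\subseteq \Nef(\mathbb{P}(E))$, using closedness of the nef cone and density of $\mathbb{Q}$ in $\mathbb{R}$, it is enough to prove nefness of $\xi-tf$ for every rational $t=p/q\le \mu_{\min}(E)$ with $q>0$. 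Pick a line bundle $L$ on $C$ of degree $-p$; by Lemma~\ref{lemma: properties slope} we have $\mu_{\min}(S^{q}E\otimes L)=q\mu_{\min}(E)-p\ge 0$, so $S^{q}E\otimes L$ is nef by Hartshorne. Via the relative Veronese embedding $\mathbb{P}(E)\hookrightarrow \mathbb{P}(S^{q}E)$, under which $\xi_{S^{q}E}$ restricts to $q\xi$ and $f_{S^{q}E}$ to $f$, the nef class $\xi_{S^{q}E\otimes L}=\xi_{S^{q}E}-pf$ restricts to $q\xi-pf$, so $\xi-tf$ is nef.

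For the reverse inclusion I first treat the semistable case. Assume $V$ is semistable on $C$ of slope $\mu$ and rank $s\ge 2$; the previous paragraph applied to $V$ shows that $\xi_V-\mu f$ is nef on $\mathbb{P}(V)$, and
\[
(\xi_V-\mu f)^{s}=\xi_V^{s}-s\mu\,\xi_V^{s-1}\cdot f=\deg V - s\mu = 0.
\]
A nef class with vanishing top self-intersection is not ample, by Kleiman's criterion, so $\xi_V-\mu f$ lies on the boundary of $\Nef(\mathbb{P}(V))$; since $f\cdot(\xi_V-\mu f)\neq 0$ it is not proportional to $f$, so it generates the other extremal ray of $\Nef(\mathbb{P}(V))$, and in particular $\xi_V-tf$ is not nef for $t>\mu$. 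For general $E$, the minimal-slope Harder--Narasimhan quotient $Q_{1}=E/E_{1}$ is semistable of slope $\mu_{\min}(E)$; the induced closed embedding $\mathbb{P}(Q_{1})\hookrightarrow \mathbb{P}(E)$ sends $\xi_E$ and $f_E$ to $\xi_{Q_{1}}$ and $f_{Q_{1}}$, so if $\xi_E-tf$ were nef on $\mathbb{P}(E)$ then its restriction $\xi_{Q_{1}}-tf_{Q_{1}}$ would be nef on $\mathbb{P}(Q_{1})$, and the semistable case (with the trivial rank-$1$ subcase handled by a direct intersection with the section $\mathbb{P}(Q_{1})\cong C$) would force $t\le \mu_{\min}(E)$.

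The main obstacle is this converse direction: Hartshorne's theorem combined with the Veronese twist gives nefness on the easy side, but to rule out nefness of $\xi-tf$ for $t>\mu_{\min}(E)$ one genuinely needs the vanishing self-intersection trick for semistable bundles, together with restriction to the projective subbundle $\mathbb{P}(Q_{1})\subset \mathbb{P}(E)$.
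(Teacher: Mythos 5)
Your proof is correct and amounts to exactly the derivation the paper intends: for this lemma the paper offers no argument of its own, only the citation of Hartshorne's ampleness criterion (Theorem 2.4 of Hartshorne's 1971 paper, cf. Fulger's Lemma 2.1), and your two steps --- nefness of $\xi - tf$ for rational $t\leq \mu_{\min}(E)$ via the twist $S^{q}E\otimes L$, the relative Veronese and Hartshorne's criterion, and the converse via the nef boundary class $\xi_{Q_1}-\mu_{\min}(E)f$ with vanishing top self-intersection on $\mathbb{P}(Q_1)\subseteq \mathbb{P}(E)$ --- are precisely the standard argument behind those references. I see no gaps; you have simply written out in full what the paper delegates to the literature.
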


The cone of pseudo-effective divisors was obtained by Nakayama in \cite[Cor. IV.3.8]{Nak04}, and it was indirectly computed by Wolfe \cite{Wol05} and Chen \cite{Che08} who independently obtained the volume function $\vol_{\mathbb{P}(E)}$ on $\N^1(\mathbb{P}(E))_{\mathbb{R}}$. A more general result on the cone of effective cycles of arbitrary codimension can be found in \cite[Theo. 1.1]{Ful11}.

\begin{lemma}\label{lemma: psef cone}
 $\Psef(\mathbb{P}(E))=\langle \xi - \mu_{\max} f,f \rangle. $
\end{lemma}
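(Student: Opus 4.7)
My plan is to read off $\Psef(\mathbb{P}(E))$ directly from the Wolfe--Chen volume formula (Theorem~\ref{theo: volume result, wolfe and chen}) by determining precisely when $\xi-tf$ is big, and then invoking the general identity $\Psef(\mathbb{P}(E))=\overline{\Bg(\mathbb{P}(E))}$ recalled in $\S 2.1$.

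Since $\N^1(\mathbb{P}(E))_\mathbb{R}=\mathbb{R}\xi\oplus \mathbb{R}f$ is two-dimensional and both cones are invariant under positive rescaling, it is enough to study the one-parameter family $\xi-tf$ for $t\in\mathbb{R}$. The key observation is that on the simplex $\widehat{\Delta}_{r-1}$ the linear function $\lambda\mapsto \sum_{j=1}^{r}\sigma_{r+1-j}\lambda_j$ is a convex combination of the ordered slopes and therefore takes values in $[\mu_{\min},\mu_{\max}]$, attaining its maximum $\mu_{\max}=\sigma_1$ at the vertex $e_r=(0,\ldots,0,1)$.

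Plugging this into Theorem~\ref{theo: volume result, wolfe and chen} yields a dichotomy. If $t\geq\mu_{\max}$, then the integrand $\max\{\sum_{j}\sigma_{r+1-j}\lambda_j-t,\,0\}$ vanishes identically on $\widehat{\Delta}_{r-1}$, so $\vol_{\mathbb{P}(E)}(\xi-tf)=0$ and $\xi-tf$ is not big. If, on the contrary, $t<\mu_{\max}$, then by continuity the integrand is strictly positive on an open neighbourhood of $e_r$, whence $\vol_{\mathbb{P}(E)}(\xi-tf)>0$ and $\xi-tf$ is big. Combined with the elementary fact that $bf$ is never big for $b\neq 0$ (because $f^{2}=0$ forces $(bf)^{r}=0$), this shows that the open big cone in $\N^1(\mathbb{P}(E))_\mathbb{R}$ is exactly $\{a\xi+bf \,:\, a>0,\; b>-a\mu_{\max}\}$.

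Taking the closure of this region yields the closed half-cone $\{a\xi+bf \,:\, a\geq 0,\;b\geq -a\mu_{\max}\}$, which is precisely the convex cone spanned by $f$ and $\xi-\mu_{\max}f$, so the lemma follows. No step should present a real obstacle; the only mild subtlety is ruling out pseudo-effective classes with negative $\xi$-coefficient, which is automatic from the closure argument since the closure of $\{a>0\}\subset\mathbb{R}$ is $\{a\geq 0\}$.
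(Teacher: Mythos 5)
Your main line of argument --- reading the big cone off the Wolfe--Chen volume formula and then invoking $\Psef(\mathbb{P}(E))=\overline{\Bg}(\mathbb{P}(E))$ --- is sound, and it is essentially the route the paper alludes to (the paper gives no proof, citing Nakayama and noting that the cone is implicit in Wolfe's and Chen's volume computations). Your analysis of the ray $\xi-tf$ is correct: the linear form $\sum_{j}\sigma_{r+1-j}\lambda_j$ has maximum $\mu_{\max}$ on $\widehat{\Delta}_{r-1}$, attained at a vertex, so the volume vanishes for $t\geq\mu_{\max}$ and is positive for $t<\mu_{\max}$; by homogeneity of the volume this settles bigness for every class $a\xi+bf$ with $a>0$.

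There is, however, a gap in the inclusion $\Psef(\mathbb{P}(E))\subseteq\langle\xi-\mu_{\max}f,f\rangle$. Since $\Psef$ is the closure of $\Bg$, you must also show that there are \emph{no} big classes with $a\leq 0$ (outside $\{a=0,\,b\geq 0\}$), and the volume formula says nothing about that region. Your two justifications there do not hold up: the claim that $bf$ is not big because $(bf)^r=0$ is only valid when the class is nef (for a non-nef class the volume is not the top self-intersection, and a big class can have vanishing or negative top self-intersection), so in particular it does not rule out $b<0$; and the remark that classes with negative $\xi$-coefficient are excluded ``automatically by the closure argument'' is circular --- the closure of $\{a>0,\;b>-a\mu_{\max}\}$ only bounds $\Psef$ from below, whereas bounding it from above requires precisely the missing statement that pseudo-effective (hence big) classes have $a\geq 0$. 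The repair is one line: a pseudo-effective class pairs non-negatively with any product of $r-1$ nef classes, and $\xi-\mu_{\min}f$ and $f$ are nef by Lemma~\ref{lemma: nef cone}; pairing $a\xi+bf$ with $(\xi-\mu_{\min}f)^{r-2}\cdot f$ gives $a\geq 0$, and pairing $bf$ with $(\xi-\mu_{\min}f)^{r-1}$ gives $b\geq 0$ (equivalently, lines in the fibers cover $\mathbb{P}(E)$ and meet every effective divisor non-negatively). With this added, your proof is complete.
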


In particular, we recover a result of Miyaoka \cite[Theo. 3.1]{Miy87} on semi-stable vector bundles over curves that was generalized by Fulger in \cite[Prop. 1.5]{Ful11}.

\begin{cor}\label{Miyaoka} A vector bundle $E$ on a smooth projective curve $C$ is semi-stable if and only if $\Nef(\mathbb{P}(E))=\Psef(\mathbb{P}(E))$. 
\end{cor}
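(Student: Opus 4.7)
The plan is to derive this as an immediate consequence of the two preceding lemmas describing the nef and pseudo-effective cones, together with the definition of semi-stability in terms of the Harder-Narasimhan filtration.

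First I would invoke Lemma \ref{lemma: nef cone} and Lemma \ref{lemma: psef cone} to write
\[
\Nef(\mathbb{P}(E)) = \langle \xi - \mu_{\min}(E) f,\, f\rangle, \qquad \Psef(\mathbb{P}(E)) = \langle \xi - \mu_{\max}(E) f,\, f\rangle.
\]
Both are two-dimensional closed convex cones in $\N^1(\mathbb{P}(E))_{\mathbb{R}} = \mathbb{R}\cdot \xi \oplus \mathbb{R}\cdot f$, and both share the ray $\mathbb{R}_{\geq 0}\cdot f$ as one of their extremal rays. The inclusion $\Nef(\mathbb{P}(E)) \subseteq \Psef(\mathbb{P}(E))$ is automatic (and also visible from $\mu_{\min}(E) \leq \mu_{\max}(E)$).

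The key observation is that two such cones coincide if and only if their other extremal rays agree, i.e.\ if and only if the classes $\xi - \mu_{\min}(E) f$ and $\xi - \mu_{\max}(E) f$ are proportional. Since both classes have the same $\xi$-coefficient equal to $1$, this proportionality holds exactly when $\mu_{\min}(E) = \mu_{\max}(E)$.

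Finally, I would appeal to the definition of the Harder-Narasimhan filtration (Proposition \ref{propo: HN filtration}): the slopes of the successive semi-stable quotients are strictly increasing, so the equality $\mu_{\min}(E) = \mu_1 = \mu_\ell = \mu_{\max}(E)$ is equivalent to the filtration having length $\ell = 1$, that is, $E = E_0$ itself being semi-stable. This gives the equivalence. There is really no hard step here — the whole content of the statement has already been absorbed into Lemmas \ref{lemma: nef cone} and \ref{lemma: psef cone}, so the only thing to be careful about is the bookkeeping that $f$ lies on the boundary of both cones and hence cannot distinguish them.
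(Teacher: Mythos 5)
Your argument is correct and is exactly the route the paper takes: the corollary is stated as an immediate consequence of Lemma \ref{lemma: nef cone} and Lemma \ref{lemma: psef cone}, the cones agreeing precisely when $\mu_{\min}(E)=\mu_{\max}(E)$, which by the strict inequalities in Proposition \ref{propo: HN filtration} happens exactly when the Harder-Narasimhan filtration has length one, i.e.\ when $E$ is semi-stable. Nothing to add.
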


We finish this section by recalling Wolfe's computation of the volume function on $\N^1(\mathbb{P}(E))$. See also \cite[Theo. 1.2]{Che08}.

\begin{notation}\label{notation: simplex}
 Let $d\geq 1$ be an integer. We define the {\it standard $d$-simplex} $\widehat{\Delta}_d$ with $d+1$ vertices in $\mathbb{R}^{d+1}$ to be
 $$\widehat{\Delta}_d=\Big\{(x_1,\ldots,x_{d+1})\in \mathbb{R}^{d+1}\;\Big|\; \textstyle{\sum_{i=1}^{d+1} x_i=1} \text{ and }x_i\geq 0 \text{ for all }i \Big\}. $$
By projecting $\widehat{\Delta}_d$ onto the hyperplane $x_1=0$, we can identify $\widehat{\Delta}_d$ with the {\it full dimensional standard $d$-simplex} (or just {\it $d$-simplex}) in $\mathbb{R}^d$ given by
 $$\Delta_d=\Big\{(x_2,\ldots,x_{d+1})\in \mathbb{R}^{d}\;\Big|\; \textstyle{\sum_{i=2}^{d+1} x_i\leq 1} \text{ and }x_i\geq 0 \text{ for all }i \Big\}. $$
Via the previous identification, we will denote by $\lambda$ the Lebesgue measure on $\widehat{\Delta}_d$ induced by the standard Lebesgue measure on $\Delta_d\subseteq \mathbb{R}^d$. In particular, we will have $\lambda(\widehat{\Delta}_d)=\frac{1}{d!}$.

Given a positive real number $a>0$, we define the {\it $d$-simplex with side length $a$} by $a\Delta_d=\big\{(x_2,\ldots,x_{d+1})\in \mathbb{R}^{d}\;\big|\; \sum_{i=2}^{d+1} x_i\leq a \text{ and }x_i\geq 0 \text{ for all }i \big\}$. Similar for $a \widehat{\Delta}_d \subseteq \mathbb{R}^{d+1}$, the {\it standard $d$-simplex with side length $a$}.
\end{notation}

\begin{thm}[{\cite[Theo. 5.14]{Wol05}}]\label{theo: Wolfe volume} Let $E$ be a vector bundle with Harder-Narasimhan filtration of length $\ell$ and semi-stable quotients $Q_i$ of ranks $r_i$ and slopes $\mu_i$. Then, for any $t\in \mathbb{R}$

$$\vol_{\mathbb{P}(E)}(\xi-tf)=r!\cdot \int_{\widehat{\Delta}_{\ell-1}}\max\left\{\sum_{i=1}^\ell \mu_i \beta_i - t,0\right\}\dfrac{\beta_1^{r_1-1}\cdots\beta_\ell^{r_\ell-1}}{(r_1-1)!\cdots (r_\ell-1)!} d\beta $$

\noindent where $\widehat{\Delta}_{\ell-1}\subseteq \mathbb{R}^\ell$ is the standard $(\ell-1)$-simplex with coordinates $\beta_1,\ldots,\beta_\ell$, and $\beta$ be the standard induced Lebesgue measure.
\end{thm}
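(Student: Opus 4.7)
My plan is to compute the asymptotic behaviour of $\h^0(\mathbb{P}(E),m(\xi-tf))$ via the Harder-Narasimhan filtration of the symmetric powers $S^m E$. By continuity of $\vol_{\mathbb{P}(E)}$ on $\Bg(\mathbb{P}(E))$ and homogeneity under rescaling, one may assume $t\in\mathbb{Q}$ and restrict to $m$ with $mt\in\mathbb{Z}$. Fixing a point $P\in C$, the projection formula together with the vanishing $\RR^i\pi_*\mathcal{O}_{\mathbb{P}(E)}(m)=0$ for $i\geq 1$ gives
$$\h^0(\mathbb{P}(E),m(\xi-tf))=\h^0(C,S^m E\otimes\mathcal{O}_C(-mtP)),$$
so the question reduces to an asymptotic count on the curve $C$.

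Next, apply Proposition \ref{propo: HN filtration of S^mE} to obtain a refinement of $\operatorname{HN}_\bullet(S^m E)$ whose graded pieces are the semi-stable bundles $Q_{\mathbf{m}}=S^{m_1}Q_1\otimes\cdots\otimes S^{m_\ell}Q_\ell$, indexed by compositions $\mathbf{m}=(m_1,\ldots,m_\ell)\in\mathbb{N}^\ell$ with $|\mathbf{m}|=m$. By the splitting principle and Lemma \ref{lemma: properties slope}, each $Q_{\mathbf{m}}$ has rank $r(\mathbf{m})=\prod_i\binom{m_i+r_i-1}{r_i-1}$ and slope $\mu(\mathbf{m})=\sum_i m_i\mu_i$; twisting by $\mathcal{O}_C(-mtP)$ preserves rank and semi-stability while shifting slopes by $-mt$. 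Hence, by Lemma \ref{lemma: properties slope} parts (5)--(6) combined with Riemann-Roch,
$$\h^0(C,Q_\mathbf{m}(-mtP))=\begin{cases}r(\mathbf{m})\bigl(\mu(\mathbf{m})-mt+1-g\bigr) & \text{if }\mu(\mathbf{m})-mt>2g-2,\\ 0 & \text{if }\mu(\mathbf{m})-mt<0.\end{cases}$$

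Feeding these two regimes into the long exact sequences attached to the filtration --- the negative-slope pieces contribute nothing, while above $2g-2$ the $\h^1$-vanishing propagates and makes $\h^0$ additive --- yields
$$\h^0(C,S^m E(-mtP))=\sum_{|\mathbf{m}|=m}r(\mathbf{m})\cdot\max\bigl(\mu(\mathbf{m})-mt,\,0\bigr)+O(m^{r-1}).$$
The error absorbs the Euler-characteristic corrections $(1-g)r(\mathbf{m})$, which total $O(m^{r-1})$, and the "transition-band" pieces with $\mu(\mathbf{m})-mt\in[0,2g-2]$: these sit in an $O(1/m)$-thin slab of the $\beta$-simplex $\beta_i=m_i/m$, so a Clifford-type $\h^0$-bound for semi-stable bundles of bounded slope contributes only $O(m^{r-2})$.

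Finally, using $\binom{m\beta_i+r_i-1}{r_i-1}=\frac{(m\beta_i)^{r_i-1}}{(r_i-1)!}\bigl(1+O(1/m)\bigr)$, the main sum is recognized as an $(\ell-1)$-dimensional Riemann sum on the lattice points of $\widehat{\Delta}_{\ell-1}$ with mesh $1/m$, and passing to the limit produces
$$\vol_{\mathbb{P}(E)}(\xi-tf)=\lim_{m\to\infty}\dfrac{r!\cdot\h^0(\mathbb{P}(E),m(\xi-tf))}{m^r}=r!\int_{\widehat{\Delta}_{\ell-1}}\max\!\left(\sum_i\mu_i\beta_i-t,\,0\right)\dfrac{\beta_1^{r_1-1}\cdots\beta_\ell^{r_\ell-1}}{(r_1-1)!\cdots(r_\ell-1)!}\,d\beta,$$
which is the claimed formula. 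The main obstacle is the filtration-level bookkeeping: propagating $\h^1$-vanishing correctly through the long exact sequences and, above all, establishing uniform $\h^0$-estimates on semi-stable bundles of bounded slope so that the transition-band contribution is genuinely of lower order in $m$.
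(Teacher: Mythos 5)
Your proposal is correct and follows essentially the same route as the proof of the cited source (the paper itself only quotes Wolfe's theorem rather than reproving it): projection formula to reduce to $\h^0(C,S^mE\otimes\mathcal{O}_C(-mtP))$, the refinement of $\operatorname{HN}_\bullet(S^mE)$ from Proposition \ref{propo: HN filtration of S^mE} with the vanishing statements of Lemma \ref{lemma: properties slope} and Riemann--Roch on each semi-stable piece, and a Riemann-sum limit over the simplex --- exactly the estimates the paper itself reuses in the proof of Theorem \ref{theo: okounkov bodies}. The only points needing routine care are the ones you already flag (a Clifford-type bound on the transition band and uniformity of the approximation $\binom{m_i+r_i-1}{r_i-1}\sim m_i^{r_i-1}/(r_i-1)!$ near the boundary of the simplex), and for the lower bound one uses $\h^0$ of the sub-bundle of the filtration whose quotients have slope above the threshold, where $\h^1$ vanishes.
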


\begin{remark}\label{remark: Huayi volume}
Alternatively, Chen computed in \cite[Theo. 1.2]{Che08} a similar volume formula, but slightly simplified by integrating in $\mathbb{R}^r$ instead of $\mathbb{R}^\ell$ (cf. \cite[Prop. 3.5]{Che08}). More precisely, with the same notation as above
 
 $$\vol_{\mathbb{P}(E)}(\xi-tf)= r! \cdot \int_{\widehat{\Delta}_{r-1}}\max\left\{\sum_{j=1}^r s_j \lambda_j - t,0\right\} d\lambda $$

\noindent where $\widehat{\Delta}_{r-1}\subseteq \mathbb{R}^r$ is the standard $(r-1)$-simplex with coordinates $\lambda_1,\ldots,\lambda_r$, $d\lambda$ is the standard induced Lebesgue measure\footnote{Unlike Chen, we do not normalize the measure in order to have $\lambda(\Delta_{r-1})=1$.}, and ${\mathbf{s}=(s_1,\ldots,s_r)}$ is a vector in $\mathbb{R}^r$ such that the value $\mu_i$ appears exactly $r_i$ times in the coordinates of $\mathbf{s}$ as in Notation \ref{notation: Wolfe's square} below. 
\end{remark}

\begin{notation}\label{notation: Wolfe's square}
 Fix $\ell\geq 1$ and $r\geq 1$ two integers, $(r_1,\ldots,r_\ell)\in \mathbb{N}^\ell$ a partition of $r$ and $t\in \mathbb{R}$. We define for $(\mu_1,\ldots,\mu_\ell)\in \mathbb{Q}^\ell$ the following polytopes:
 $$\widehat{\square}_{t}=\left\{(\beta_1,\ldots,\beta_{\ell})\in\widehat{\Delta}_{\ell-1}\subseteq \mathbb{R}^{\ell}\;\left|\;\sum_{i=1}^\ell \mu_i\beta_i\geq t \right.\right\} $$
 \noindent and
 $$\square_t=\left\{(\lambda_1,\ldots,\lambda_r)\in \widehat{\Delta}_{r-1}\subseteq \mathbb{R}^r\;\left|\; \sum_{i=1}^r s_i\lambda_i \geq t \right.\right\}, $$
 \noindent where 
 $$\mathbf{s}=(\underbrace{\mu_1,\ldots,\mu_1}_{r_1\text{ times}},\underbrace{\mu_2,\ldots,\mu_2}_{r_2 \text{ times}},\ldots,\underbrace{\mu_\ell,\ldots,\mu_\ell}_{r_\ell \text{ times}})\in \mathbb{Q}^r.$$
Similarly, for every permutation $w\in \mathfrak{S}_r$ we define
 $$\square_t^{w}=\left\{(\lambda_1,\ldots,\lambda_r)\in \widehat{\Delta}_{r-1}\subseteq \mathbb{R}^r\;\left|\; \sum_{i=2}^r \sigma_{w(i-1)}\lambda_i + \sigma_{w(r)}\lambda_1 \geq t \right.\right\}, $$
 \noindent where 
$$\boldsymbol{\sigma}=(\underbrace{\mu_\ell,\ldots,\mu_\ell}_{r_\ell\text{ times}},\underbrace{\mu_{\ell-1},\ldots,\mu_{\ell-1}}_{r_{\ell-1} \text{ times}},\ldots,\underbrace{\mu_1,\ldots,\mu_1}_{r_1 \text{ times}})\in \mathbb{Q}^r.$$
By abuse of notation, we will also denote by $\square_t$ and $\square_t^{w}$ the full dimensional polytopes in $\mathbb{R}^{r-1}$ obtained via the projection of $\widehat{\Delta}_{r-1}\subseteq \mathbb{R}^r$ onto $\Delta_{r-1}\subseteq \mathbb{R}^{r-1}$. Explicitly, if $(\nu_2,\ldots,\nu_r)$ are coordinates in $\mathbb{R}^{r-1}$ then
 $$\square_t=\left\{(\nu_2,\ldots,\nu_r)\in \Delta_{r-1}\subseteq \mathbb{R}^{r-1}\;\left|\;s_1\left(1-\sum_{i=2}^r \nu_i\right)+\sum_{i=2}^r s_i\nu_i \geq t  \right.\right\} $$
 \noindent and
 \begin{equation*} \square_t^{w}=\left\{(\nu_2,\ldots,\nu_r)\in \Delta_{r-1}\;\left|\; \sum_{i=2}^r \sigma_{w(i-1)}\nu_i+\sigma_{w(r)}\left(1-\sum_{i=2}^r \nu_i\right) \geq t  \right.\right\}. \tag{$\star$} \label{eq:star} \end{equation*}
\end{notation}

\begin{cor}\label{cor: volume Okounkov bodies}
 For any admissible flag $Y_\bullet$ on $\mathbb{P}(E)$ and any big rational class $\xi-tf$ on $\mathbb{P}(E)$ we have that 
 $$\vol_{\mathbb{R}^r}(\Delta_{Y_\bullet}({\xi-tf}))=\int_{\widehat{\square}_t} \left(\sum_{i=1}^\ell \mu_i \beta_i - t\right)\dfrac{\beta_1^{r_1-1}\cdots\beta_\ell^{r_\ell-1}}{(r_1-1)!\cdots (r_\ell-1)!}d\beta,$$
\noindent for $\widehat{\square}_t \subseteq \mathbb{R}^\ell$ as in Notation \ref{notation: Wolfe's square}.
\end{cor}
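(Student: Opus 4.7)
The proof I would propose is essentially a direct combination of two earlier results, so the plan is short. The starting point is the Lazarsfeld--Musta\c{t}\u{a} identity
\[
\vol_{\mathbb{R}^r}\bigl(\Delta_{Y_\bullet}(D)\bigr) \;=\; \frac{1}{r!}\,\vol_{\mathbb{P}(E)}(D)
\]
for any big class $D$ on the $r$-dimensional variety $\mathbb{P}(E)$ (stated in the excerpt as a consequence of \cite[Theo.~A]{LM09}), which applies to $D = \xi - tf$ because this class is big by assumption and the identity does not depend on the choice of admissible flag $Y_\bullet$.

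Next I would invoke Wolfe's volume formula (Theorem~\ref{theo: Wolfe volume}) to rewrite the right-hand side as
\[
\vol_{\mathbb{R}^r}\bigl(\Delta_{Y_\bullet}(\xi-tf)\bigr) \;=\; \int_{\widehat{\Delta}_{\ell-1}} \max\!\left\{\sum_{i=1}^{\ell}\mu_i\beta_i - t,\;0\right\}\,\dfrac{\beta_1^{r_1-1}\cdots\beta_\ell^{r_\ell-1}}{(r_1-1)!\cdots(r_\ell-1)!}\,d\beta.
\]
The factor $r!$ in Wolfe's formula cancels exactly the $\frac{1}{r!}$ coming from Lazarsfeld--Musta\c{t}\u{a}, so no normalization issue arises.

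To finish, I would simply unwind the definition of $\widehat{\square}_t$ from Notation~\ref{notation: Wolfe's square}: by construction $\widehat{\square}_t$ is the locus of $(\beta_1,\ldots,\beta_\ell)\in \widehat{\Delta}_{\ell-1}$ where $\sum_{i=1}^\ell \mu_i \beta_i \geq t$. On this set $\max\{\sum \mu_i\beta_i - t,0\} = \sum \mu_i\beta_i - t$, and the integrand vanishes on its complement in $\widehat{\Delta}_{\ell-1}$. Restricting the integration domain accordingly yields exactly the stated formula.

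There is essentially no obstacle here: the statement is a formal repackaging in which Wolfe's integral is split into its non-trivial part. The only point worth double-checking is that the Lebesgue measure $d\beta$ on $\widehat{\Delta}_{\ell-1}$ is the same one used in both statements (it is, as specified in Notation~\ref{notation: simplex} and Theorem~\ref{theo: Wolfe volume}), and that bigness of $\xi-tf$ is all that is needed to apply the Lazarsfeld--Musta\c{t}\u{a} equality (this is indeed the content of \cite[Theo.~A]{LM09}, which requires only that the class be big and makes no use of any flag-compatibility).
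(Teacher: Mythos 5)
Your proof is correct and matches the paper's intent exactly: the corollary is stated as an immediate consequence of Wolfe's formula (Theorem \ref{theo: Wolfe volume}) combined with the Lazarsfeld--Musta\c{t}\u{a} identity $\vol_{\mathbb{R}^r}(\Delta_{Y_\bullet}(D))=\frac{1}{r!}\vol_{\mathbb{P}(E)}(D)$, with the integration domain cut down to $\widehat{\square}_t$ where the $\max$ is nonzero. No further comment is needed.
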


\begin{remark}\label{remark: augmented base locus}
Let $F=\pi^{-1}(q)\cong \mathbb{P}^{r-1}$ be any fiber of $\pi:\mathbb{P}(E)\to C$. Then, for any big $\mathbb{R}$-divisor $D$ on $\mathbb{P}(E)$ we have that $F\not\subseteq \mathbf{B}_{+}(D)$. In fact, if $D\sim_{\mathbb{R}}A+E$, with $A$ ample $\mathbb{R}$-divisor and $E$ effective $\mathbb{R}$-divisor, and if $F\subseteq \operatorname{Supp}(E)$, then we can write 
$E=aF+E'$ with $a>0$ and $F\not\subseteq \operatorname{Supp}(E')$, which implies that $F\not\subseteq \mathbf{B}_{+}(D)$ since $A+aF$ is ample.
\end{remark}

As a direct consequence, we compute the restricted volume function on a fiber $F=\pi^{-1}(q)\cong \mathbb{P}^{r-1}$.

\begin{cor}\label{cor: restricted volume}
 Let $F$ be a fiber of $\pi:\mathbb{P}(E)\to C$ and let $\xi-tf$ be a big rational class. Then,
 $$\vol_{\mathbb{P}(E)|F}(\xi-tf)=(r-1)!\cdot\int_{\widehat{\square}_t}\dfrac{\beta_1^{r_1-1}\cdots\beta_\ell^{r_\ell-1}}{(r_1-1)!\cdots (r_\ell-1)!}d\beta, $$
 
\noindent for $\widehat{\square}_t \subseteq \mathbb{R}^\ell$ as in Notation \ref{notation: Wolfe's square}. In particular, if $0\leq \tau \leq \mu_\ell-t$ then we have that 
$$\vol_{\mathbb{R}^{r-1}}(\Delta_{Y_\bullet}({\xi-tf})_{\nu_1=\tau})=\int_{\widehat{\square}_{t+\tau}}\dfrac{\beta_1^{r_1-1}\cdots\beta_\ell^{r_\ell-1}}{(r_1-1)!\cdots (r_\ell-1)!}d\beta, $$
\noindent where $Y_\bullet$ is any admissible flag on $\mathbb{P}(E)$ with divisorial component $Y_1=F$. In particular, these volumes depend only on $\operatorname{gr}(\operatorname{HN}_\bullet(E))$, the graded vector bundle associated to the Harder-Narasimhan filtration of $E$.
\end{cor}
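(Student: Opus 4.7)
\textbf{Proof plan for Corollary \ref{cor: restricted volume}.} The plan is to extract the restricted volume from Wolfe's volume formula via the derivative formula in Theorem \ref{slices}(3), and then to read off the slice volume from Theorem \ref{slices}(2). The preliminary point is that a general fiber $F=\pi^{-1}(q)\cong \mathbb{P}^{r-1}$ is never contained in $\mathbf{B}_+(\xi-tf)$ for a big class (Remark \ref{remark: augmented base locus}), so the restricted Newton--Okounkov body and the restricted volume are well-defined.

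First, I would compute $\vol_{\mathbb{P}(E)|F}(\xi-tf)$. Since $F\not\subseteq \mathbf{B}_+(\xi-tf)$, Theorem \ref{slices}(3) applied with $\eta = \xi - tf$ gives
\[
\frac{d}{d\tau}\bigl(\vol_{\mathbb{P}(E)}(\xi-tf+\tau f)\bigr)\Big|_{\tau=0}= r\cdot \vol_{\mathbb{P}(E)|F}(\xi-tf).
\]
Setting $s=t-\tau$, the left-hand side equals $-\tfrac{d}{ds}\vol_{\mathbb{P}(E)}(\xi-sf)|_{s=t}$. Now I would plug in Wolfe's formula (Theorem \ref{theo: Wolfe volume}) and differentiate under the integral sign using $\tfrac{d}{ds}\max\{u-s,0\}=-\mathbf{1}_{\{u\geq s\}}$. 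The integrand is dominated uniformly on compact intervals of $s$, so this is harmless. The indicator $\mathbf{1}_{\{\sum_i\mu_i\beta_i\geq t\}}$ cuts out exactly the polytope $\widehat{\square}_t\subseteq \widehat{\Delta}_{\ell-1}$ of Notation \ref{notation: Wolfe's square}, yielding
\[
r\cdot \vol_{\mathbb{P}(E)|F}(\xi-tf)= r!\cdot \int_{\widehat{\square}_t}\dfrac{\beta_1^{r_1-1}\cdots\beta_\ell^{r_\ell-1}}{(r_1-1)!\cdots (r_\ell-1)!}\, d\beta,
\]
and dividing by $r$ gives the stated formula.

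Second, for the slice formula I would invoke Theorem \ref{slices}(2) with $\eta=\xi-tf$ and $Y_1=F$ (a fiber has numerical class $f$): for $0\leq \tau <\tau_F(\xi-tf)=\mu_\ell-t$,
\[
\Delta_{Y_\bullet}(\xi-tf)_{\nu_1=\tau}=\Delta_{\mathbb{P}(E)|F}\bigl(\xi-(t+\tau)f\bigr).
\]
Taking $(r-1)$-dimensional Lebesgue volume and using the identity $\vol_{\mathbb{R}^{r-1}}(\Delta_{\mathbb{P}(E)|F}(D))=\tfrac{1}{(r-1)!}\vol_{\mathbb{P}(E)|F}(D)$ from \cite[\S 2.3]{LM09}, the first part of the corollary applied at $t+\tau$ in place of $t$ gives
\[
\vol_{\mathbb{R}^{r-1}}\bigl(\Delta_{Y_\bullet}(\xi-tf)_{\nu_1=\tau}\bigr)=\int_{\widehat{\square}_{t+\tau}}\dfrac{\beta_1^{r_1-1}\cdots\beta_\ell^{r_\ell-1}}{(r_1-1)!\cdots (r_\ell-1)!}\, d\beta.
\]
The boundary case $\tau=\mu_\ell-t$ then follows by continuity of $\tau\mapsto\vol_{\mathbb{R}^{r-1}}(\Delta_{Y_\bullet}(\xi-tf)_{\nu_1=\tau})$ and the fact that $\widehat{\square}_{\mu_\ell}$ reduces to a lower-dimensional face.

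The final assertion about dependence only on $\operatorname{gr}(\operatorname{HN}_\bullet(E))$ is immediate: the integrand and the defining inequality of $\widehat{\square}_t$ involve only the ranks $r_i$ and slopes $\mu_i$ of the semi-stable quotients $Q_i$, which are precisely the numerical invariants of the graded bundle. The only mildly delicate point is justifying the interchange of derivative and integral and handling the (measure-zero) set where $\sum_i \mu_i\beta_i=t$; I expect this to be the main technical step, but it is standard since the function $s\mapsto\max\{u-s,0\}$ is Lipschitz in $s$ uniformly in $u$, so dominated convergence applies.
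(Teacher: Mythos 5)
Your proposal is correct and follows essentially the same route as the paper: apply Theorem \ref{slices}(3) (valid since $F\not\subseteq\mathbf{B}_+(\xi-tf)$ by Remark \ref{remark: augmented base locus}) to Wolfe's formula and differentiate under the integral sign to get the restricted volume, then use Theorem \ref{slices}(2) together with $\vol_{\mathbb{R}^{r-1}}(\Delta_{\mathbb{P}(E)|F}(D))=\tfrac{1}{(r-1)!}\vol_{\mathbb{P}(E)|F}(D)$ for the slice statement. Your extra remarks on dominated convergence and the boundary case $\tau=\mu_\ell-t$ are fine and only make explicit what the paper leaves implicit.
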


\begin{proof}
 We consider 
 $$v(\tau)=\vol_{\mathbb{P}(E)}(\xi-tf+\tau f)=r!\cdot \int_{\widehat{\square}_{t-\tau}} \left(\sum_{i=1}^\ell \mu_i \beta_i - t+\tau\right)\dfrac{\beta_1^{r_1-1}\cdots\beta_\ell^{r_\ell-1}}{(r_1-1)!\cdots (r_\ell-1)!}d\beta.$$
 Since $F \not\subseteq \mathbf{B}_{+}(\xi-tf)$ by Remark \ref{remark: augmented base locus}, Theorem \ref{slices} and differentiation under the integral sign give
 $$\vol_{\mathbb{P}(E)|F}(\xi-tf)=\dfrac{1}{r}\cdot\left.\dfrac{d}{d\tau}v(\tau)\right|_{\tau=0}=(r-1)!\cdot \displaystyle\int_{\widehat{\square}_{t}} \dfrac{\beta_1^{r_1-1}\cdots\beta_\ell^{r_\ell-1}}{(r_1-1)!\cdots (r_\ell-1)!}d\beta.$$
\end{proof}

\begin{lemma}\label{lemma: Huayi volume slices}
 Following Notation \ref{notation: Wolfe's square} \eqref{eq:star}, we have that
 $$\vol_{\mathbb{R}^r}(\Delta_{Y_\bullet}(\xi-tf))=\int_{\square_t}\left(\sum_{j=1}^r s_j \lambda_j - t\right)d\lambda $$
 \noindent and
 $$\vol_{\mathbb{R}^{r-1}}(\Delta_{Y_\bullet}({\xi-tf})|_{\nu_1=\tau})=\int_{\square_{t+\tau}}d\lambda=\vol_{\mathbb{R}^{r-1}}(\square_{t+\tau}). $$
 Moreover, $\vol_{\mathbb{R}^{r-1}}(\square_{t+\tau})=\vol_{\mathbb{R}^{r-1}}(\square_{t+\tau}^{w})$ for every $w\in \mathfrak{S}_r$.
\end{lemma}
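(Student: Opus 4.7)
The plan is to derive all three identities from Chen's volume formula (Remark~\ref{remark: Huayi volume}) combined with standard Newton-Okounkov theory. For the first identity, since $\xi-tf$ is big, the Lazarsfeld-Musta\c{t}\u{a} identity $\vol_{\mathbb{R}^r}(\Delta_{Y_\bullet}(D))=\tfrac{1}{r!}\vol_{\mathbb{P}(E)}(D)$ combined with Chen's formula immediately gives
\[
 \vol_{\mathbb{R}^r}(\Delta_{Y_\bullet}(\xi-tf)) = \int_{\widehat{\Delta}_{r-1}}\max\Bigl\{\,\textstyle\sum_{j=1}^r s_j\lambda_j - t,\;0\,\Bigr\}\,d\lambda = \int_{\square_t}\Bigl(\,\textstyle\sum_{j=1}^r s_j \lambda_j - t\,\Bigr)\,d\lambda,
\]
the last equality being the very definition of $\square_t$.

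For the second identity I would mirror the proof of Corollary~\ref{cor: restricted volume}, but starting from the first identity rather than from Wolfe's formula. Set
\[
 v(\tau)=\vol_{\mathbb{P}(E)}(\xi-(t-\tau)f)=r!\int_{\widehat{\Delta}_{r-1}}\max\Bigl\{\,\textstyle\sum_j s_j\lambda_j - (t-\tau),\;0\,\Bigr\}\,d\lambda.
\]
The integrand is Lipschitz in $\tau$ with $\partial_\tau$-derivative equal almost everywhere to the characteristic function $\mathbf{1}_{\square_{t-\tau}}$, so differentiation under the integral sign yields $v'(0)=r!\int_{\square_t}d\lambda$. Since $F\not\subseteq \mathbf{B}_{+}(\xi-tf)$ by Remark~\ref{remark: augmented base locus}, Theorem~\ref{slices}(3) gives $v'(0)=r\cdot \vol_{\mathbb{P}(E)|F}(\xi-tf)$, whence $\vol_{\mathbb{P}(E)|F}(\xi-tf)=(r-1)!\int_{\square_t}d\lambda$. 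Replacing $t$ by $t+\tau$ and combining with Theorem~\ref{slices}(2) together with the restricted volume identity $\vol_{\mathbb{R}^{r-1}}(\Delta_{\mathbb{P}(E)|F}(D))=\tfrac{1}{(r-1)!}\vol_{\mathbb{P}(E)|F}(D)$ yields the desired formula.

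For the third identity, viewed inside $\widehat{\Delta}_{r-1}\subseteq \mathbb{R}^r$ both $\square_{t+\tau}$ and $\square_{t+\tau}^{w}$ are of the form $\{\lambda\in \widehat{\Delta}_{r-1}\;|\;\sum_i a_i\lambda_i\geq t+\tau\}$, where the tuple $(a_i)$ is some permutation of $(s_1,\ldots,s_r)$, because the multisets $\{s_1,\ldots,s_r\}$ and $\{\sigma_1,\ldots,\sigma_r\}$ coincide. The corresponding coordinate permutation of $\mathbb{R}^r$ preserves $\widehat{\Delta}_{r-1}$ together with its induced Lebesgue measure, so it maps $\square_{t+\tau}$ bijectively onto $\square_{t+\tau}^{w}$ without changing the $(r-1)$-dimensional volume.

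The argument is essentially formal; the only delicate point is the differentiation under the integral in the second step, where $\tau$ appears both in the integrand $\sum_j s_j\lambda_j - (t-\tau)$ and in the domain $\square_{t-\tau}$. The way around this is to work on the fixed domain $\widehat{\Delta}_{r-1}$ with the Lipschitz integrand $\max\{\,\cdot\,,0\}$, so that only the integrand depends on $\tau$ and a routine dominated convergence argument controls the derivative.
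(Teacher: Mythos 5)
Your proposal is correct and follows essentially the same route as the paper: the first identity comes from Chen's formula (Remark \ref{remark: Huayi volume}) combined with the Lazarsfeld--Musta\c{t}\u{a} volume identity, the second repeats the differentiation-under-the-integral argument of Corollary \ref{cor: restricted volume} (via Theorem \ref{slices} and Remark \ref{remark: augmented base locus}) starting from Chen's formula, and the third is the paper's own coordinate-permutation argument preserving $\widehat{\Delta}_{r-1}$ and its induced measure. Your write-up merely makes explicit the details the paper compresses into ``follow from Remark \ref{remark: Huayi volume}''.
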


\begin{proof}
The first two equalities follow from Remark \ref{remark: Huayi volume}. For the last assertion consider $\square_{t}\subseteq \mathbb{R}^r$ and $\square_{t}^{w}\subseteq \mathbb{R}^r$ as in Notation \ref{notation: Wolfe's square}. Then, there is a linear transformation $T:\mathbb{R}^r \to \mathbb{R}^r$, whose associated matrix in the canonical basis of $\mathbb{R}^r$ is given by a permutation matrix, such that $T(\square_t)=\square_t^{w}$ and $|\det(T)|=1$.
\end{proof}

\section{Newton-Okounkov bodies on projective bundles over curves}

Let $E$ be a vector bundle on a smooth projective curve $C$, of rank $r\geq 2$ and degree $d$. In this section we study the  geometry of Newton-Okounkov bodies of rational big classes in $\N^1(\mathbb{P}(E))$ in terms of the numerical information of the Harder-Narasimhan filtration of $E$. In particular, Theorem A will be a consequence of Lemma \ref{lemma:reduction to big nef}, Theorem \ref{theo: okounkov bodies} and Corollary \ref{cor: main result}. We follow Notation \ref{notation: HN quotients}. 

\begin{defn}[Linear flag]\label{defi: natural flag} A complete flag of subvarieties $Y_\bullet$ on the projective vector bundle $\mathbb{P}(E)\xrightarrow{\pi} C$ is called a {\it linear flag centered at $p\in \mathbb{P}(E)$, over the point $q\in C$} (or simply a {\it linear flag}) if $Y_0=\mathbb{P}(E)$, $Y_1=\pi^{-1}(q)\cong \mathbb{P}^{r-1}$ and $Y_i \cong \mathbb{P}^{r-i}$ is a linear subspace of $Y_{i-1}$ for $i=2,\ldots,r$, with $Y_r=\{p\}$.
\end{defn}

\subsection{Ruled surfaces}

Let us begin with the following example that illustrates the general case. Namely, that the shape of Newton-Okounkov bodies on $\mathbb{P}(E)$ will depend on the semi-stability of $E$.

\begin{exmp}\label{ex:ruled surfaces} Suppose that $\rk(E)= 2$ and let $\eta=a(\xi-\mu_\ell f)+bf \in \N^1(\mathbb{P}(E))_\mathbb{Q}$ be a big class. In other words, $a,b\in \mathbb{Q}_{>0}$. Let 
$$Y_\bullet: \mathbb{P}(E) \supseteq F=\pi^{-1}(q) \supseteq \{p\}$$
\noindent be the linear flag centered at $p\in \mathbb{P}(E)$, over $q\in C$. The Newton-Okounkov body of $\eta$ can be computed by applying \cite[Theo. 6.4]{LM09} (see Example \ref{ex:surfaces}).

\medskip

\noindent\textit{Semi-stable case.} If $E$ is semi-stable then Corollary \ref{Miyaoka} implies that every big class is ample. In particular, for every $\eta \in \Bg(X)_{\mathbb{Q}}$ and every $\mathbb{Q}$-divisor $D_\eta$ with numerical class $\eta$, we have that $N(D_\eta)=0$ and $P(D_\eta)=D_\eta$. It follows that, with the notation as in Example \ref{ex:surfaces}, $\nu=0$, $\tau_F(\eta)=b$, $\alpha(t)=0$ for every $t\in [0,b]$ and $\beta(t)=a$ for every $t\in [0,b]$. The Newton-Okounkov bodies are given by rectangles in this case.

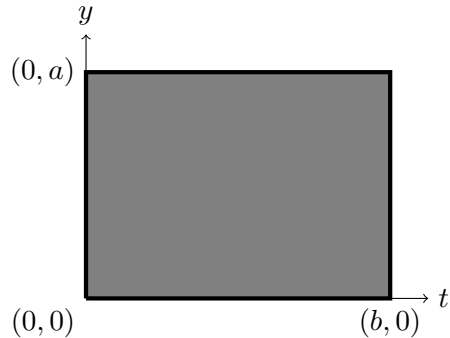
\begin{figure}[H]
\begin{center}
\begin{tikzpicture}
%
% Objects
%
\draw [->]  (0,0) -- (4.5,0);   % horizontal axis
\node [right] at (4.5,0) {$t$};
\draw [->]  (0,0) -- (0,3.5); %  vertical axis 
\node [above] at (0,3.5) {$y$}; 
\draw [fill=gray, ultra thick] (0,0) -- (0,3) -- (4,3) -- (4,0) -- (0,0);  
%\node [left] at (2,4) {$\Delta_{Y_\bullet}(\eta)$};

%
% Points
%
\node [below left] at (0,0) {$(0,0)$};
\node [left] at (0,3) {$(0,a)$};
\node [below] at (4,0) {$(b,0)$};
\end{tikzpicture} 
\end{center}
\caption{Newton-Okounkov body $\Delta_{Y_\bullet}(\eta)$ for $E$ semi-stable}\label{fig:1}
\end{figure}

\medskip

\noindent \textit{Unstable case.} If $E$ is unstable we consider its Harder-Narasimhan filtration
$$0 \to E_1 \to E \to Q_1 \to 0, $$
and we note that in this case $E_1 = Q_\ell$. The quotient $E\to Q_1 \to 0$ corresponds to a section $s:C\to \mathbb{P}(E)$ with $[s(C)]=\xi-\mu_\ell f$ in $\N^1(\mathbb{P}(E))$. The curve $s(C)$ is the only irreducible curve on $\mathbb{P}(E)$ with negative self-intersection. On the other hand, if $D_{\eta}$ is any $\mathbb{Q}$-divisor with numerical class $\eta$ then, either $\eta$ is inside the nef cone of $\mathbb{P}(E)$ and thus $P(D_\eta)=D_\eta$ and $N(D_\eta)=0$, or $\eta$ is big and not nef in which case we compute that 
$$[P(D_\eta)]=\frac{b}{\mu_\ell - \mu_1}(\xi-\mu_1 f) \;\; \text{and}\;\; [N(D_\eta)]=\left(\frac{a(\mu_\ell - \mu_1)-b}{\mu_\ell - \mu_1} \right)(\xi - \mu_\ell f) $$
in $\N^1(\mathbb{P}(E))_\mathbb{Q}$. We notice that $N(D_\eta)=\left(\frac{a(\mu_\ell - \mu_1)-b}{\mu_\ell - \mu_1} \right)\cdot s(C)$ as $\mathbb{Q}$-divisor, by minimality of the negative part and by the fact that the negative part is unique in its numerical equivalence class. See \cite[Lemm. 14.10, Cor. 14.13]{Bad01} for details.

Let $t^*=b-a(\mu_\ell-\mu_1)$. We have that $\eta$ is big and nef if and only if $t^*\geq 0$, in which case the class $\eta - tf$ is big and nef for $0\leq t \leq t^*$. For $t^* \leq t \leq b$, the same computation above enables us to find the Zariski decomposition of $D_\eta - t F$, which is big and not nef.

We notice that, with the notation as in Example \ref{ex:surfaces}, $\nu=0$. On the other hand, the functions $\alpha(t)$ and $\beta(t)$ will depend on whether or not we have $\{p\}=s(C)\cap F$. A straightforward computation shows that the Newton-Okounkov bodies are given by the following finite polygons in $\mathbb{R}^2$.

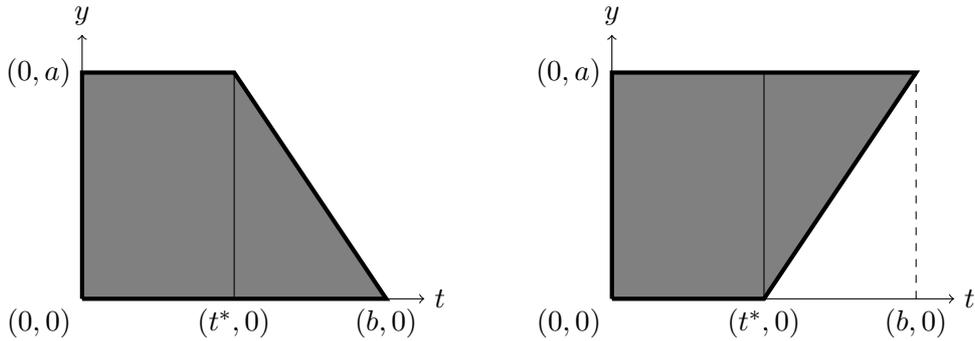
\begin{figure}[H]
\begin{center}
\begin{tikzpicture}
%
% Objects
%
\draw [->]  (0,0) -- (4.5,0);   % horizontal axis
\node [right] at (4.5,0) {$t$};
\draw [->]  (0,0) -- (0,3.5); %  vertical axis 
\node [above] at (0,3.5) {$y$}; 
\draw [fill=gray, ultra thick] (0,0) -- (0,3) -- (2,3) -- (4,0) -- (0,0);  
\draw (2,0) -- (2,3); % vertical line

%
% Points
%
\node [below left] at (0,0) {$(0,0)$};
\node [below] at (2,0) {$(t^*,0)$};
\node [left] at (0,3) {$(0,a)$};
\node [below] at (4,0) {$(b,0)$};
\end{tikzpicture} 
\hspace{7mm}
\begin{tikzpicture}
%
% Objects
%
\draw [->]  (0,0) -- (4.5,0);   % horizontal axis
\node [right] at (4.5,0) {$t$};
\draw [->]  (0,0) -- (0,3.5); %  vertical axis 
\node [above] at (0,3.5) {$y$}; 
\draw [fill=gray, ultra thick] (0,0) -- (0,3) -- (4,3) -- (2,0) -- (0,0);  
\draw (2,0) -- (2,3); % vertical line
\draw [dashed] (4,0) -- (4,3);
%
% Points
%
\node [below left] at (0,0) {$(0,0)$};
\node [below] at (2,0) {$(t^*,0)$};
\node [left] at (0,3) {$(0,a)$};
\node [below] at (4,0) {$(b,0)$};
\end{tikzpicture} 
\end{center}
\caption{$\Delta_{Y_\bullet}(\eta)$ for $E$ unstable and $\eta$ big and nef \\ {(a) if $\{p\}\neq s(C)\cap F $} \hspace{3.5cm} (b) if $\{p\}=s(C)\cap F$}\label{fig:2}
\end{figure}

\begin{figure}[H]
\begin{center}
\begin{tikzpicture}
%
% Objects
%
\draw [->]  (0,0) -- (2.5,0);   % horizontal axis
\node [right] at (2.5,0) {$t$};
\draw [->]  (0,0) -- (0,3.5); %  vertical axis 
\node [above] at (0,3.5) {$y$}; 
\draw [fill=gray, ultra thick] (0,0) -- (0,3) -- (2,0) -- (0,0);

%
% Points
%
\node [below left] at (0,0) {$(0,0)$};
\node [left] at (0,3) {$(0,\frac{b}{\mu_\ell-\mu_1})$};
\node [below] at (2,0) {$(b,0)$};
\end{tikzpicture} 
\hspace{1cm}
\begin{tikzpicture}
%
% Objects
%
\draw [->]  (0,0) -- (2.5,0);   % horizontal axis
\node [right] at (2.5,0) {$t$};
\draw [->]  (0,0) -- (0,3.5); %  vertical axis 
\node [above] at (0,3.5) {$y$}; 
\draw [fill=gray, ultra thick] (0,0.5) -- (0,3) -- (2,3) -- (0,0.5);  
\draw [dashed] (2,0) -- (2,3); % vertical line
%
% Points
%
\node [below left] at (0,0) {$(0,0)$};
\node [left] at (0,0.5) {$(0,\frac{-t^*}{\mu_\ell-\mu_1})$};
\node [left] at (0,3) {$(0,a)$};
\node [below] at (2,0) {$(b,0)$};
\end{tikzpicture} 
\end{center}
\caption{$\Delta_{Y_\bullet}(\eta)$ for $E$ unstable and $\eta$ big and not nef \\ {(a) if $\{p\}\neq s(C)\cap F $} \hspace{3.5cm} (b) if $\{p\}=s(C)\cap F$}\label{fig:3}
\end{figure}
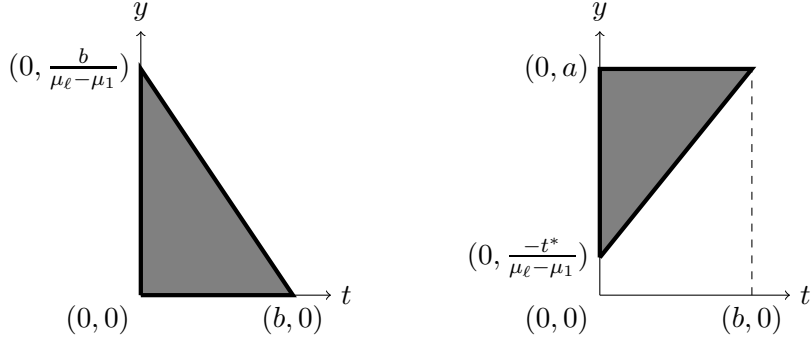 

We notice that Figure \ref{fig:3} provides examples of big and not nef divisors classes $\eta$ such that the origin $\vec{0}\in \Delta_{Y_\bullet}(\eta)$ for almost every linear flag $Y_\bullet$ except for one. This shows in particular that condition (2) in the characterization of nefness given in \cite[Cor. 2.2]{KL15b} has to be checked for {\it all} linear flags.

\end{exmp}

\subsection{Some reductions}

We first observe that Proposition B in $\S 1$ states that the shape of Newton-Okounkov bodies on projective semi-stable vector bundles will be similar as in Example \ref{ex:ruled surfaces} above. Moreover, this provides a characterization of semi-stability in terms of Newton-Okounkov bodies. 

\begin{proof}[Proof of Proposition B]
 
\noindent $(1)\Rightarrow (2)$. Let $\eta=a(\xi-\mu_\ell f)+bf$ be a big rational class on $\mathbb{P}(E)$ and $Y_\bullet: X = Y_0 \supseteq Y_1 \supseteq Y_2 \supseteq \cdots \supseteq Y_{r-1} \supseteq Y_r = \{p\}$ be a linear flag centered at $p\in \mathbb{P}(E)$, over the point $q\in C$. Since $E$ is semi-stable we have that $\Bg(\mathbb{P}(E))=\Amp(\mathbb{P}(E))$ by Corollary \ref{Miyaoka}. Equivalently, we have that $\mathbf{B}_+(\eta)= \emptyset$ for every big rational class $\eta$. 

We notice that $\tau_F(\eta)=\sup\{s>0\;|\;\eta - s f \in \Bg(\mathbb{P}(E)) \}=b$. The implication follows from Proposition \ref{propo:psef=nef} and \cite[Cor. 4.11]{Bo12}, by noting that if $D_\eta$ is a $\mathbb{Q}$-divisor with numerical class $\eta$ then 
$$\Delta_{Y_\bullet}(\eta)_{\nu_1=t}=\Delta_{Y_\bullet|F}(F,(D_\eta-tF)|_F)=a \Delta_{Y_\bullet|F}(F,H) = a\Delta_{r-1} \subseteq \mathbb{R}^{r-1},$$
where $H\subseteq F\cong \mathbb{P}^{r-1}$ is an hyperplane section. 
\medskip

\noindent $(2)\Rightarrow (1)$. We notice that if for all linear flags $Y_\bullet$ the Newton-Okounkov body of $\eta=a(\xi-\mu_\ell f)+bf$ is given by $\Delta_{Y_\bullet}(\eta)=[0,b]\times a\Delta_{r-1} \subseteq \mathbb{R}^r$ then $\eta$ is a big and nef class, by \cite[Cor. 2.2]{KL15b}. We can therefore compute the volume of $\eta$ via the top self-intersection $\vol_{\mathbb{P}(E)}(\eta)=\eta^r = r a^{r-1}\left(b-a(\mu_\ell-\mu(E))\right)$. On the other hand, we have that
\begin{equation*}
 \begin{aligned}
\vol_{\mathbb{R}^n}\left(\Delta_{Y_\bullet}(\eta) \right) = \vol_{\mathbb{R}^r}\left([0,b]\times a\Delta_{r-1} \right) = \frac{a^{r-1}}{(r-1)!}b,
 \end{aligned}
\end{equation*}
 and hence \cite[Theo. A]{LM09} leads to $\mu_\ell = \mu(E)$, implying the result.
\end{proof}

We will first reduce our problem to computing the Newton-Okounkov body of the big and nef divisor class $\xi-\mu_1 f$.

\begin{lemma}\label{lemma:reduction to big nef}
 Let $E$ be a unstable vector bundle on a smooth projective curve $C$, with $\rk(E)=r\geq 2$. Then for every big class $\eta = a(\xi-\mu_\ell f)+bf$ and every linear flag $Y_\bullet$ we have that
 \begin{itemize}
  \item[(1)] $\Delta_{Y_\bullet}(\eta)=\left([0,t^*]\times a\Delta_{r-1} \right) \cup \left(a \Delta_{Y_\bullet}(\xi-\mu_1 f)+t^*\vec{e}_1\right)$ if $\eta$ is big and nef;
  \item[(2)] $\Delta_{Y_\bullet}(\eta)=a \Delta_{Y_\bullet}(\xi-\mu_1 f)_{\nu_1 \geq - t^*}+t^*\vec{e}_1$ if $\eta$ is big and not nef.
 \end{itemize}
 Here $t^*=b-a(\mu_\ell-\mu_1)$ and $a\Delta_{r-1}\subseteq \mathbb{R}^{r-1}$ is the $(r-1)$-simplex with side length $a$. 
\end{lemma}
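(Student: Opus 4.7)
The plan is to apply Theorem~\ref{slices} to the linear flag $Y_\bullet$, whose divisorial component $F:=Y_1=\pi^{-1}(q)$ has numerical class $f$. Since by Remark~\ref{remark: augmented base locus} no fiber lies in the augmented base locus of a big class, the ``translation identity''
$$\Delta_{Y_\bullet}(\eta')_{\nu_1\geq t}=\Delta_{Y_\bullet}(\eta'-tf)+t\vec{e}_1$$
is available for every big class $\eta'$ on $\mathbb{P}(E)$ and every $0\leq t<\tau_F(\eta')$. I will combine it with the scaling relation $\Delta_{Y_\bullet}(a\eta')=a\Delta_{Y_\bullet}(\eta')$ and with the rewriting $\eta=a(\xi-\mu_1 f)+t^*f$, splitting the argument according to the sign of $t^*$.

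\emph{Case (1), $t^*\geq 0$.} Here $\tau_F(\eta)=b>t^*$, so applying the translation identity to $\eta'=\eta$ with $t=t^*$ yields
$$\Delta_{Y_\bullet}(\eta)_{\nu_1\geq t^*}=\Delta_{Y_\bullet}\bigl(a(\xi-\mu_1 f)\bigr)+t^*\vec{e}_1=a\Delta_{Y_\bullet}(\xi-\mu_1 f)+t^*\vec{e}_1,$$
which produces the second piece of the claimed union. For the complementary range $\nu_1=t\in[0,t^*)$, the class $\eta-tf=a(\xi-\mu_1 f)+(t^*-t)f$ lies strictly inside the nef cone described by Lemma~\ref{lemma: nef cone}, hence is ample; Proposition~\ref{propo:psef=nef} then gives
$$\Delta_{Y_\bullet}(\eta)_{\nu_1=t}=\Delta_{Y_\bullet|F}\bigl(\mathbb{P}^{r-1},(\eta-tf)|_F\bigr)=a\Delta_{r-1},$$
since $f|_F=0$, $\xi|_F=H$ is the hyperplane class on $F\cong\mathbb{P}^{r-1}$, and the Newton-Okounkov body of $\mathcal{O}_{\mathbb{P}^{r-1}}(a)$ with respect to any linear flag is the scaled standard simplex $a\Delta_{r-1}$. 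Sweeping $t$ over $[0,t^*]$ (the endpoint being recovered by continuity of the slices of a closed convex body, since slices of $\Delta_{Y_\bullet}(\eta)$ vary continuously in $t$) contributes the first piece $[0,t^*]\times a\Delta_{r-1}$ and completes this case.

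\emph{Case (2), $t^*<0$.} Now I instead apply the translation identity to the big and nef class $\eta':=a(\xi-\mu_1 f)=\eta-t^*f$ with parameter $-t^*>0$; the inequality $-t^*<\tau_F(\eta')=a(\mu_\ell-\mu_1)$ is just $b>0$, so this is legitimate and yields
$$a\Delta_{Y_\bullet}(\xi-\mu_1 f)_{\nu_1\geq -t^*}=\Delta_{Y_\bullet}(\eta')_{\nu_1\geq -t^*}=\Delta_{Y_\bullet}(\eta'+t^*f)-t^*\vec{e}_1=\Delta_{Y_\bullet}(\eta)-t^*\vec{e}_1,$$
whence a rearrangement produces the claim. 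The main subtlety common to both cases is that the ``boundary'' class $a(\xi-\mu_1 f)$ is nef but not ample, so Proposition~\ref{propo:psef=nef} cannot be invoked on it directly; this is circumvented by applying it only to the genuinely ample classes $\eta-tf$ with $t$ in an open interval, and then passing to the boundary using the continuity of slices of the global Newton-Okounkov body.
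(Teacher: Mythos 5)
Your argument is correct and follows essentially the same route as the paper: Theorem \ref{slices} (the translation identity for the part $\nu_1\geq t^*$, resp.\ applied to $a(\xi-\mu_1 f)$ with parameter $-t^*$ in the non-nef case) combined with Proposition \ref{propo:psef=nef} and the simplex description of Newton-Okounkov bodies on $\mathbb{P}^{r-1}$ for the slices over $[0,t^*]$. The only difference is that you spell out the hypothesis checks ($t^*<\tau_F(\eta)=b$, $-t^*<a(\mu_\ell-\mu_1)$, and the boundary slice at $t=t^*$ via continuity) which the paper leaves implicit.
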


\begin{proof}
 If $\eta$ is an ample rational class then ${\Delta_{Y_\bullet}(\eta)_{\nu_1=t}=a\Delta_{r-1}}$ for $0\leq t \leq t^*$, by Proposition \ref{propo:psef=nef} and \cite[Exam. 1.1]{LM09}. On the other hand, we have that ${\Delta_{Y_\bullet}(\eta)_{\nu_1 \geq t^*}= a \Delta_{Y_\bullet}(\xi - \mu_1 f)+t^*\vec{e}_1}$ by Theorem \ref{slices}.
 
 If $\eta$ is big and not nef then $t^*<0$. Theorem \ref{slices} implies therefore that ${a \Delta_{Y_\bullet}(\xi - \mu_1 f)_{\nu_1\geq -t^*}= \Delta_{Y_\bullet}(\eta)-t^*\vec{e}_1}$, which leads to (2).
\end{proof}

\subsection{A toric computation}

We will need the following result concerning the Newton-Okounkov bodies of some toric graded algebras.

\begin{lemma}\label{lemm: NO of special algebras}
Fix $A\in \mathbb{R}$, an integer $r\geq 2$, $\boldsymbol{\sigma}=(\sigma_1,\ldots,\sigma_r)\in \mathbb{R}^r$ and homogeneous coordinates $[x_1:\ldots:x_r]$ on $\mathbb{P}^{r-1}$. Put $B_0=k$ and consider for every integer $m\geq 1$ the vector subspace $B_m\subseteq \h^0(\mathbb{P}^{r-1},\mathcal{O}_{\mathbb{P}^{r-1}}(m))$ generated by monomials ${x^\alpha=x_1^{\alpha_1}\cdots x_r^{\alpha_r}}$ of total degree $|\alpha|=m$ such that $\sum_{i=1}^r \sigma_i \alpha_i > A$. Suppose that $A\geq 0$ and $\sigma_1\geq \sigma_2 \geq \cdots \geq \sigma_r$. Then
 \begin{enumerate}
  \item $B_\bullet=\oplus_{m\geq 0}B_m$ is a graded subalgebra of the coordinate ring $k[x_1,\ldots,x_r]$.
  \item Let us denote by $V_\bullet$ the flag of linear subspaces 
  $$V_\bullet: V_0=\mathbb{P}^{r-1}\supseteq V_1 \supseteq \cdots \supseteq V_{r-1},$$
  where $V_i=\{x_1=\ldots=x_i=0\}$ for $i=1,\ldots,r-1$ and consider the Schubert cell decomposition of the full flag variety parametrizing complete linear flag on $\mathbb{P}^{r-1}$
  $$\mathbb{F}_r = \coprod_{w\in \mathfrak{S}_r} \Omega_w, $$
  with respect to $V_\bullet$. Fix a permutation $w\in \mathfrak{S}_r$ and let $Y_\bullet \in \Omega_w$. Then $\Delta_{Y_\bullet}(B_\bullet)$ is given by the projection of the polytope
  $$\left\{(\nu_1,\ldots,\nu_r)\in \mathbb{R}^r_{\geq 0}\;\left|\;\sum_{i=1}^r \nu_i=1 \text{ and } \sum_{i=1}^r \sigma_{w(i)} \nu_i \geq 0 \right.\right\} $$
  onto the hyperplane $\{\nu_r=0\}$.
 \end{enumerate}
\end{lemma}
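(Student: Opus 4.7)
For part (1), closure under multiplication needs only be checked on the monomial generators. If $x^\alpha\in B_m$ and $x^\beta\in B_{m'}$, then $x^{\alpha+\beta}$ has degree $m+m'$ and satisfies
\[
\sum_{i=1}^r\sigma_i(\alpha_i+\beta_i)=\sum_{i=1}^r\sigma_i\alpha_i+\sum_{i=1}^r\sigma_i\beta_i>2A\geq A
\]
thanks to the hypothesis $A\geq 0$; hence $x^{\alpha+\beta}\in B_{m+m'}$, and $B_\bullet$ is a graded subalgebra of $k[x_1,\ldots,x_r]$.

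For part (2), I would first treat the $T$-fixed representative $Y_\bullet^w$ of the Schubert cell $\Omega_w$, given by $Y_i^w=\{x_{w(1)}=\cdots=x_{w(i)}=0\}$. Iterating the definition of the valuation---compute the order of vanishing along the divisor $Y_1^w=\{x_{w(1)}=0\}$, restrict to $Y_1^w$, and repeat---yields for any monomial $x^\alpha$ of degree $m$ the formula
\[
\nu_{Y_\bullet^w}(x^\alpha)=(\alpha_{w(1)},\ldots,\alpha_{w(r-1)})\in\mathbb{N}^{r-1}.
\]
Since $B_m$ has a monomial basis and the valuation with respect to a flag of coordinate subspaces sends a polynomial to the lex-smallest monomial valuation appearing in its support (in the $w$-order), the graded semigroup of $B_\bullet$ is exactly
\[
\Gamma_{Y_\bullet^w}(B_\bullet)=\bigl\{(\alpha_{w(1)},\ldots,\alpha_{w(r-1)},m)\in\mathbb{N}^{r-1}\times\mathbb{N}\ :\ |\alpha|=m,\ \textstyle\sum_{i}\sigma_i\alpha_i>A\bigr\}.
\]
Passing to the closed convex cone and slicing at height one amounts to rescaling $\alpha\mapsto\alpha/m$ and letting $m\to\infty$; the assumption $A\geq 0$ ensures that the strict constraint $\sum_i\sigma_i\alpha_i>A$ yields the homogeneous condition $\sum_i\sigma_i\beta_i\geq 0$ in the limit. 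Reindexing $\nu_i=\beta_{w(i)}$ and using $\sum_{i=1}^{r}\nu_i=\sum_{j=1}^{r}\beta_j=1$ to write $\nu_r=1-\sum_{i=1}^{r-1}\nu_i$ identifies the resulting body with the projection onto $\{\nu_r=0\}$ of the polytope stated in the lemma.

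Finally, to handle an arbitrary $Y_\bullet\in\Omega_w$, I would exploit the fact that $B_\bullet$ is a toric subalgebra invariant under the maximal torus $T$, and argue that the Newton-Okounkov body depends only on the relative position of the flag with respect to the reference flag $V_\bullet$---i.e., only on the Schubert cell $\Omega_w$. Concretely, writing $Y_\bullet=g\cdot Y_\bullet^w$ for some element $g$ in the Borel subgroup fixing $V_\bullet$, one has $\nu_{Y_\bullet}(s)=\nu_{Y_\bullet^w}(g^{-1}s)$ for any section $s\in B_m$, and a dimension count comparing $\dim B_m$ with the cardinality of the height-$m$ slice of $\Gamma_{Y_\bullet}(B_\bullet)$---together with a generic-vanishing argument that matches maximal orders of vanishing along successive components of $Y_\bullet$ with those of $Y_\bullet^w$---shows that the closed convex cones generated by $\Gamma_{Y_\bullet}(B_\bullet)$ and $\Gamma_{Y_\bullet^w}(B_\bullet)$ coincide. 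I anticipate this last step to be the principal technical hurdle, since it requires controlling the interaction between the unipotent part of the Borel action and the strict inequality defining $B_m$; the $T$-invariance of $B_\bullet$ is what makes the computation tractable.
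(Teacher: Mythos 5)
Your part (1) and your computation for the distinguished flag $Y_\bullet^w$ (the valuation formula $\nu_{Y_\bullet^w}(x^\alpha)=(\alpha_{w(1)},\ldots,\alpha_{w(r-1)})$, the monomial-basis argument, and the passage to the limit turning the strict inequality $\sum_i\sigma_i\alpha_i>A$ into the homogeneous condition $\geq 0$) agree with the paper's proof. The gap is in your final step, the passage from $Y_\bullet^w$ to an arbitrary $Y_\bullet\in\Omega_w$. Writing $Y_\bullet=g\cdot Y_\bullet^w$ with $g$ in the Borel subgroup fixing $V_\bullet$ and using $\nu_{Y_\bullet}(s)=\nu_{Y_\bullet^w}(g^{-1}s)$ is the right start, but to conclude that the two graded semigroups have the same image you need $g^{-1}B_m=B_m$, i.e.\ that $B_\bullet$ is stable under the whole Borel, not just under the torus; $T$-invariance (being spanned by monomials) is not enough, and the substitutes you offer do not close this. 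The dimension count is vacuous here: for \emph{any} admissible flag the valuation image of $B_m\setminus\{0\}$ has cardinality $\dim_k B_m$, yet the Newton--Okounkov body genuinely depends on the Schubert cell, so equal cardinalities cannot force equal semigroups or equal cones. The ``generic-vanishing argument matching maximal orders of vanishing'' is left unspecified, and it is exactly the point where a proof is required.

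The missing idea — and the place where the hypothesis $\sigma_1\geq\sigma_2\geq\cdots\geq\sigma_r$, which your proposal never uses, must enter — is a direct verification that every automorphism $\varphi$ of $\mathbb{P}^{r-1}$ fixing $V_\bullet$ (a lower-triangular matrix $(\varphi_{i,j})$) maps $B_m$ into itself: expanding
$$\varphi(x^\alpha)=\prod_{i=1}^r(\varphi_{i,1}x_1+\cdots+\varphi_{i,i}x_i)^{\alpha_i}$$
one sees that each monomial $x^{\alpha'}$ occurring has its exponent mass moved from $x_i$ to variables $x_j$ with $j\leq i$, so by the monotonicity of the $\sigma_i$ one gets $\sum_i\sigma_i\alpha_i'\geq\sum_i\sigma_i\alpha_i>A$, hence $x^{\alpha'}\in B_m$. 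With this Borel-invariance in hand, $\{\nu_{Y_\bullet}(P)\}_{P\in B_m\setminus\{0\}}=\{\nu_{Y_\bullet^w}(P)\}_{P\in B_m\setminus\{0\}}$ and $\Delta_{Y_\bullet}(B_\bullet)=\Delta_{Y_\bullet^w}(B_\bullet)$, which is precisely how the paper concludes; without it, your reduction to the cell representative is not justified.
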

\begin{proof}
Let us suppose that $x^\alpha \in B_m$ and $x^{\alpha'}\in B_{m'}$, then $x^{\alpha+\alpha'}$ belongs to $b_{m+m'}$ since we have $\sum_{i=1}^r \sigma_i (\alpha_i+\alpha_i') > 2A \geq A$ as long as $A\geq 0$. This proves (1).

In order to prove (2) we will first show that every automorphism $\varphi$ of $\mathbb{P}^{r-1}$ that fixes the flag $V_\bullet$ induces an automorphism of the graded algebra $B_\bullet$. For this, we remark that automorphisms $\varphi$ of $\mathbb{P}^{r-1}$ fixing the flag $V_\bullet$ correspond to lower triangular matrices in $\operatorname{PGL}_r(k)$ and hence, given $\varphi=(\varphi_{i,j})_{1\leq i,j \leq r}$ such a matrix, the image of the monomial $x^\alpha \in B_m$ via the induced action on $k[x_1,\ldots,x_r]$ is given by
$$\varphi(x^\alpha)=\prod_{i=1}^r (\varphi_{i,1}x_1+\ldots+\varphi_{i,i}x_i)^{\alpha_i}.$$
The above product can be written as a linear combination of monomials of the form $x^{\alpha'}=x^{k_1+\ldots+k_r}$ where $k_i=(k_{i,1},\ldots,k_{i,i},0,\ldots,0)\in \mathbb{N}^r$ is such that $|k_i|=\alpha_i$. Let us prove that all these monomials belong to $B_m$ as well. In fact, we have that $\alpha_i'=\sum_{j=i}^r k_{j,i}$ and hence
$$ 
\begin{array}{ccll}
\sum_{i=1}^r \sigma_i \alpha_i' & = & \sum_{i=1}^r \sum_{j=i}^r  \sigma_i k_{j,i}=\sum_{i=1}^r \sum_{j=1}^i \sigma_j k_{i,j} \\
 & \geq & \sum_{i=1}^r \sum_{j=1}^i \sigma_i k_{i,j} & \text{ since } \sigma_1\geq\cdots \geq \sigma_r \\
 & = & \sum_{i=1}^r \sigma_i |k_i| = \sum_{i=1}^r \sigma_i \alpha_i > A.
\end{array}
$$
It follows that $\varphi$ induces an automorphism of the graded algebra $B_\bullet$.

In order to compute Newton-Okounkov bodies with respect to linear flags on a given Schubert cell, we note that \cite[Prop. 1.2.1]{Bri05} implies that given a permutation $w\in \mathfrak{S}_r$ and a linear flag $Y_\bullet \in \Omega_w$ there exists an automorphism $\varphi$ of $\mathbb{P}^{r-1}$ that fixes the reference flag $V_\bullet$ and such that the image of the flag $Y_\bullet^w$ via the induced action of $\varphi$ on $\mathbb{F}_r$ is $Y_\bullet$, where $Y_\bullet^w$ is the linear flag such that for every $i=1,\ldots,r-1$ we have
$$Y_i^w = \{x_{w(1)}=\ldots=x_{w(i)}=0 \} \subseteq \mathbb{P}^{r-1}.$$
It follows from the previous paragraph that $\varphi$ induces an automorphism of the graded algebra $B_\bullet$ and then for every $m\geq 1$ and every $P\in B_m$ we have that
$$\nu_{Y_\bullet}(\varphi(P))=\nu_{\varphi(Y_\bullet^w)}(\varphi(P))=\nu_{Y_\bullet^w}(P).$$
In particular, we have $\{ \nu_{Y_\bullet}(P) \}_{P\in B_{m}} = \{ \nu_{Y_\bullet^w}(P) \}_{P\in B_{m}}\subseteq \mathbb{N}^{r-1}$ and consequently ${\Delta_{Y_\bullet}(B_{\bullet})=\Delta_{Y_\bullet^w}(B_{\bullet})}$. Therefore, we can suppose that $Y_\bullet=Y_\bullet^w\in \Omega_w$ in order to prove (2).

Since $B_m$ is generated by monomials we have that
$$\Delta_{Y_\bullet^w}(B_{\bullet}) = \overline{\bigcup_{m\geq 1}\left\{\left.\frac{\nu_{Y_\bullet^w}(x^\alpha)}{m}\;\right|\;x^\alpha\in B_{m} \right\}}.$$
We compute for every monomial $x^\alpha=x_1^{\alpha_1}\cdots x_r^{\alpha_r}\in B_m$ that
$$\nu_{Y_\bullet^w}(x^\alpha)=(\alpha_{w(1)},\alpha_{w(2)},\ldots,\alpha_{w(r-1)})\in \mathbb{N}^{r-1} $$
and hence for every $m\geq 1$ the set $\left\{\left.\frac{\nu_{Y_\bullet^w}(x^\alpha)}{m}\;\right|\;x^\alpha\in B_{m} \right\}$ is given by the set of points of the form $\left(\frac{\alpha_{w(1)}}{m},\ldots,\frac{\alpha_{w(r-1)}}{m} \right) \in \mathbb{Q}^{r-1}_{\geq 0}$ where $\alpha=(\alpha_1,\ldots,\alpha_r)\in \mathbb{N}^r$ is such that $|\alpha|=\alpha_1+\ldots+\alpha_r=m$ and $\sum_{i=1}^r \sigma_i \alpha_i > A$. Equivalently, is given by the set of points $\left(\frac{\alpha_{w(1)}}{m},\ldots,\frac{\alpha_{w(r-1)}}{m} \right) \in \mathbb{Q}^{r-1}_{\geq 0}$ such that
\begin{itemize} 
\item[$\circ$] $\frac{\alpha_{w(1)}}{m}+\ldots+\frac{\alpha_{w(r-1)}}{m}\leq 1$ and 
\item[$\circ$] ${\textstyle \begin{array}{ll} \sum_{i=1}^r \sigma_{i} \frac{\alpha_{i}}{m} &=\sum_{i=1}^r \sigma_{w(i)} \frac{\alpha_{w(i)}}{m}\\
& = \sum_{i=1}^{r-1} \sigma_{w(i)} \frac{\alpha_{w(i)}}{m} + \sigma_{w(r)}\left(1-\sum_{i=1}^{r-1}  \frac{\alpha_{w(i)}}{m}  \right) > \frac{A}{m}. \end{array}}$
\end{itemize}
We conclude therefore that
$$\Delta_{Y_\bullet^w}(B_{\bullet}) = \left\{(\nu_1,\ldots,\nu_{r-1})\in \Delta_{r-1}\;\left|\; \sum_{i=1}^{r-1} \sigma_{w(i)} \nu_i + \sigma_{w(r)}\left(1-\sum_{i=1}^{r-1} \nu_i \right) \geq 0 \right.\right\},$$
from which (2) follows.
\end{proof}

\begin{remark}
The assumption $\sigma_1\geq \sigma_2 \geq \cdots \geq \sigma_r$ in Lemma \ref{lemm: NO of special algebras} can always be fulfilled by considering an automorphism of $\mathbb{P}^{r-1}$ permuting the chosen homogeneous coordinates.
\end{remark}

\subsection{Proofs of main results}

Let us note that given $q\in C$, the Harder-Narasimhan filtration
$$\operatorname{HN}_\bullet(E) : 0=E_\ell \subseteq E_{\ell-1} \subseteq \cdots \subseteq E_1 \subseteq E_0=E$$
induces a (not necessarily complete) flag on $\mathbb{P}(E)$ in the following way: let $Y_0=\mathbb{P}(E)$ and for every $i=2,\ldots,\ell$ the exact sequence
$$0\to Q_i\to E/E_i\to E/E_{i-1}\to 0 $$
\noindent induces an inclusion $\mathbb{P}((E/E_{i-1})|_q)\hookrightarrow \mathbb{P}((E/E_i)|_q) $. We obtain therefore the following (possibly partial) flag of linear subvarieties
$$\mathbb{P}((E/E_1)|_q)\subseteq \mathbb{P}((E/E_2)|_q)\subseteq \cdots \subseteq \mathbb{P}((E/E_{\ell-1})|_q)\subseteq \mathbb{P}(E|_q)=\pi^{-1}(q)\subseteq \mathbb{P}(E) $$
\noindent with $\operatorname{codim}_{\mathbb{P}(E)}\mathbb{P}((E/E_i)|_q)=\operatorname{rank}(E_i)+1$. We also note that this flag is complete if and only if all the semi-stable quotients $Q_i$ are line bundles. In general, it will be necessary to choose a complete linear flag on each $\mathbb{P}((E_{i-1}/E_i)|_q)=\mathbb{P}(Q_i|_q)$ in order to complete the flag above.

We shall consider linear flags that are compatible with the Harder-Narasimhan filtration of $E$ in the sense that they complete the previous flag.

\begin{defn}[Compatible linear flag]\label{defi: compatible natural flags} A linear flag $Y_\bullet$ on $\mathbb{P}(E)$ over $q\in C$ is said to be {\it compatible with the Harder-Narasimhan filtration of $E$} if 
$$Y_{\operatorname{rank}E_i+1}=\mathbb{P}((E/E_i)|_q)\cong \mathbb{P}^{r-\operatorname{rank}E_i-1} \subseteq \mathbb{P}(E)$$
\noindent for every $i=1,\ldots,\ell$. 
\end{defn}

We will adopt the following convention.

\begin{convention}\label{convention:schubert cells}
 Let $V_\bullet$ be a fixed linear flag on $\mathbb{P}(E)$ over $q\in C$ and consider the corresponding Schubert cell decomposition 
 $$\mathbb{F}_r = \coprod_{w\in \mathfrak{S}_r} \Omega_w, $$
 where $\mathbb{F}_r$ is the full flag variety parametrizing complete flags of linear subspaces of $V_1 = \pi^{-1}(q)\cong \mathbb{P}^{r-1}$. We say that a linear flag $Y_\bullet$ on $\mathbb{P}(E)$ over $q\in C$ {\it belongs} to a Schubert cell $\Omega_w$ if the induced linear flag $Y_\bullet|Y_1$ belongs to $\Omega_w$.
\end{convention}

We can prove now the main reduction step. Namely, compute the Newton-Okounkov body of the big and nef class $\xi-\mu_1f$ with respect to any linear flag.

\begin{thm}\label{theo: okounkov bodies}
Let $C$ be a smooth projective curve and let $E$ be a vector bundle over $C$ of rank $r\geq 2$. Fix a linear flag $Y_\bullet^{\HN}$ on $\mathbb{P}(E)$ over $q\in C$ which is compatible with the Harder-Narasimhan filtration of $E$ and let
$$\mathbb{F}_r=\coprod_{w\in \mathfrak{S}_r}\Omega_w $$
be the corresponding Schubert cell decomposition of the full flag variety $\mathbb{F}_r$ parametrizing linear flags on $\pi^{-1}(q)\cong \mathbb{P}^{r-1}$. Then, for every linear flag $Y_\bullet$ on $\mathbb{P}(E)$ over $q\in C$ that belongs to $\Omega_w$ we have that
$$\Delta_{Y_\bullet}(\xi-\mu_1f)=\left\{(\nu_1,\ldots,\nu_r)\in \mathbb{R}_{\geq 0}^r \;|\;0\leq \nu_1\leq \mu_\ell-\mu_1,\;(\nu_2,\ldots,\nu_r)\in \square_{\mu_1+\nu_1}^{w} \right\}, $$
where $\square_{\mu_1+\nu_1}^{w}\subseteq \mathbb{R}^{r-1}$ is the full dimensional polytope defined in Notation \mbox{\ref{notation: Wolfe's square} \eqref{eq:star}}, with $(\mu_1,\ldots,\mu_\ell)$ and $(r_1,\ldots,r_\ell)$ given by the Harder-Narasimhan \mbox{filtration} of $E$ as in Notation \ref{notation: HN quotients}.
\end{thm}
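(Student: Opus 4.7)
The plan is to compute $\Delta_{Y_\bullet}(\xi - \mu_1 f)$ slice by slice using Theorem~\ref{slices}, to identify each slice with a restricted Newton--Okounkov body on the fiber $F = Y_1 \cong \mathbb{P}^{r-1}$, and then to match the resulting graded linear series with a monomial subalgebra governed by Lemma~\ref{lemm: NO of special algebras}. Since $\xi - \mu_1 f$ is big and nef (Lemma~\ref{lemma: nef cone}) with $\tau_F(\xi - \mu_1 f) = \mu_\ell - \mu_1$ (Lemma~\ref{lemma: psef cone}), and $F \not\subseteq \mathbf{B}_+$ for every big class by Remark~\ref{remark: augmented base locus}, Theorem~\ref{slices}(2) gives, for every rational $\tau \in [0, \mu_\ell - \mu_1)$,
$$\Delta_{Y_\bullet}(\xi - \mu_1 f)_{\nu_1 = \tau} = \Delta_{\mathbb{P}(E)|F}\bigl(\xi - (\mu_1 + \tau)f\bigr),$$
the right-hand side being computed relative to the induced flag $Y_\bullet | Y_1$ on $F$; the endpoint $\tau = \mu_\ell - \mu_1$ will follow by closedness of the Newton--Okounkov body. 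The task therefore reduces to identifying each restricted body with the polytope $\square_{\mu_1 + \tau}^w$ of \eqref{eq:star}.

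For this I would fix a basis $f_1, \ldots, f_r$ of $E|_q$ adapted to the Harder--Narasimhan filtration, ordered so that $(f_1, \ldots, f_{r_\ell})$ lifts $Q_\ell|_q$, the next $r_{\ell-1}$ vectors lift $Q_{\ell-1}|_q$, and so on. With the dual homogeneous coordinates $y_1, \ldots, y_r$ on $F$, the flag $Y^{\mathrm{HN}}_\bullet | Y_1$ plays the role of the reference flag of Lemma~\ref{lemm: NO of special algebras}, and for a monomial $y^\alpha$ of partition type $\mathbf{m} = (m_1, \ldots, m_\ell)$ the weight $\sum_k \sigma_k \alpha_k$ (with $\boldsymbol{\sigma}$ as in Notation~\ref{notation: Wolfe's square}) coincides with the slope $\sum_j m_j \mu_j$. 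Using the identifications $\h^0(\mathbb{P}(E), m\xi + \pi^* L) = \h^0(C, S^m E \otimes L)$ together with $\mathcal{O}_{\mathbb{P}(E)}(F) = \pi^* \mathcal{O}_C(q)$, the graded linear series computing $\Delta_{\mathbb{P}(E)|F}(\xi - (\mu_1+\tau)f)$ is
$$A_m^\tau = \text{Im}\bigl(\h^0(C, S^m E \otimes \mathcal{O}_C(-m(\mu_1+\tau)q)) \xrightarrow{\mathrm{ev}_q} (S^m E)|_q\bigr) \subseteq \h^0(F, \mathcal{O}_F(m)).$$

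The heart of the argument is to pin down $A_m^\tau$ asymptotically. Set $N = m(\mu_1+\tau)$, and apply Proposition~\ref{propo: HN filtration of S^mE} to produce a refinement $F_\bullet$ of $\mathrm{HN}_\bullet(S^m E)$ whose semi-stable graded pieces $G_i = S^{m_1}Q_1 \otimes \cdots \otimes S^{m_\ell}Q_\ell$ have slope $\mu(G_i) = \sum_j m_j \mu_j$; the induced filtration at $q$ groups monomials by partition type, so $(G_i)|_q$ is spanned exactly by the monomials of weight $\mu(G_i)$. Twisting by $\mathcal{O}_C(-Nq)$, running the long exact sequences along $F_\bullet$, and invoking the $\h^0$- and $\h^1$-vanishing statements of Lemma~\ref{lemma: properties slope}(5)--(6): the pieces with $\mu(G_i) < N$ contribute nothing, while those with $\mu(G_i) > N + 2g - 1$ evaluate surjectively onto $(G_i)|_q$ and lift freely. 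This yields the sandwich
$$\bigl\langle y^\alpha \,:\, \textstyle\sum_k \sigma_k \alpha_k > N + 2g - 1\bigr\rangle \;\subseteq\; A_m^\tau \;\subseteq\; \bigl\langle y^\alpha \,:\, \textstyle\sum_k \sigma_k \alpha_k \geq N\bigr\rangle,$$
whose ends are monomial subalgebras of the shape considered in Lemma~\ref{lemm: NO of special algebras} for the shifted weights $\tilde{\sigma}_k = \sigma_k - (\mu_1 + \tau)$ and non-negative constants differing only by $O(1)$. Lemma~\ref{lemm: NO of special algebras}(2) applied to $Y_\bullet | Y_1 \in \Omega_w$ then produces the same Newton--Okounkov body for both bounding algebras, namely $\square_{\mu_1+\tau}^w$; by monotonicity of Newton--Okounkov bodies under inclusions of graded linear series one obtains $\Delta_{Y_\bullet | Y_1}(A_\bullet^\tau) = \square_{\mu_1+\tau}^w$, and reassembling the slices via Theorem~\ref{slices}(1) finishes the proof.

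The main obstacle is the sandwich step above: one must verify that the refinement provided by Proposition~\ref{propo: HN filtration of S^mE} induces on the fiber precisely the partition-type stratification of the monomial basis adapted to the Harder--Narasimhan filtration of $E$, propagate the $\h^1$-vanishing from Lemma~\ref{lemma: properties slope}(6) through $F_\bullet \otimes \mathcal{O}_C(-Nq)$, and check that the boundary range $\mu(G_i) \in [N, N + 2g - 1]$, where the two bounding algebras may disagree, accounts only for a discrepancy sublinear in $m$ and is therefore invisible in the asymptotic semigroup defining the Newton--Okounkov body.
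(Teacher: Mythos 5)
Your argument is correct, and it follows the paper's skeleton up to the last step: reduction to rational slices via Theorem \ref{slices}, identification of each slice with the restricted algebra $A_\bullet^\tau$ of images of $\h^0(C,S^mE\otimes\mathcal{O}_C(-m(\mu_1+\tau)q))$ in the fiber, the choice of coordinates adapted to $Y_\bullet^{\HN}$, the use of Proposition \ref{propo: HN filtration of S^mE} plus the vanishing statements of Lemma \ref{lemma: properties slope} to produce the monomial subalgebra of weights $>N+2g-1$, and Lemma \ref{lemm: NO of special algebras} to compute its body on each Schubert cell. Where you genuinely diverge is in how you force equality with $\Delta_{Y_\bullet|F}(A_\bullet^\tau)$: the paper only establishes the lower inclusion $B_{n\bullet}\subseteq A_{n\bullet}$ and then concludes by a volume comparison, i.e.\ it imports Wolfe--Chen's formula through Corollary \ref{cor: restricted volume} and Lemma \ref{lemma: Huayi volume slices} and uses equality of volumes of nested convex bodies. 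You instead add the upper monomial bound (sections of $S^mE(-Nq)$ factor through the Harder--Narasimhan piece of $S^mE$ whose graded quotients have slope $\geq N$, hence evaluate into the span of monomials of weight $\geq N$), and close by monotonicity between two monomial algebras with the same Okounkov body. This sandwich is sound, and it buys something real: it makes the computation self-contained, with no appeal to Theorem \ref{theo: Wolfe volume}; indeed, combined with \cite[Theo.~A]{LM09} it would reprove the volume formula rather than use it. It also makes your closing worry moot: you never need to estimate the contribution of the pieces with slope in $[N,N+2g-1]$, since both bounding bodies already coincide. Two small patches are still needed, exactly as in the paper's Steps 3--5: pass to a Veronese subalgebra (or round) so that $m(\mu_1+\tau)\in\mathbb{Z}$ and the twists are genuine line bundles, and note that the upper bounding algebra uses the threshold $A=0$ with a non-strict inequality (and, for $g=0$, the lower one has $2g-1<0$), so Lemma \ref{lemm: NO of special algebras} must be invoked in the trivially adapted form where ``$>A$'' is replaced by ``$\geq 0$''; the proof of that lemma goes through verbatim and yields the same polytope.
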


\begin{proof}
We follow Notation \ref{notation: HN quotients}. We first note that if $E$ is semi-stable then $\mu_1=\mu_\ell=\mu(E)$ and hence the Theorem follows from Proposition B. Let us suppose from now on that $E$ is unstable.

\vspace{2mm}

For the reader's convenience, the proof is subdivided into several steps. We first observe that it is enough to compute rational slices of $\Delta_{Y_\bullet}(\xi-\mu_1f)$. In order to do so, we consider the restricted algebra $A_\bullet$ whose Newton-Okounkov body computes the desired slice. After performing a suitable Veronese embedding $A_{n\bullet}\subseteq A_\bullet$, we define a graded subalgebra $B_{n\bullet}\subseteq A_{n\bullet}$ which turns out to be a toric graded algebra as in Lemma \ref{lemm: NO of special algebras}. A comparison of volumes leads to the result.

\vspace{2mm}

\noindent {\it Step 1. Reduction to rational slices.} It follows from Theorem \ref{slices} and Lemma \ref{lemma: psef cone} that the projection of the Newton-Okounkov body of $\xi-\mu_1f$ onto the first coordinate is given by 
$$\operatorname{pr}_1(\Delta_{Y_\bullet}(\xi-\mu_1f))=[0,\mu_\ell-\mu_1].$$

By continuity of slices of Newton-Okounkov bodies (cf. \cite[Lemm. 1.7]{KL15b}), it suffices to consider a fixed $t\in ]0,\mu_\ell-\mu_1[\cap \mathbb{Q}$ and show that the slice of $\Delta_{Y_\bullet}(\xi-\mu_1f)$ at $t\in ]0,\mu_\ell-\mu_1[\cap \mathbb{Q}$ is given by
 $$\Delta_{Y_\bullet}(\xi-\mu_1f)|_{\nu_1=t}=\square_{\mu_1+t}^{w}\subseteq \mathbb{R}^{r-1} $$
 \noindent for linear flags $Y_\bullet$ on $\mathbb{P}(E)$ that belong to the Schubert cell $\Omega_w$ (see Convention \ref{convention:schubert cells}). 
 
Let us fix from now on a permutation $w\in \mathfrak{S}_r$ and a linear flag $Y_\bullet$ on $\mathbb{P}(E)$, over $q\in C$, and let us denote by $Y_\bullet|F$ the induced flag on $F$. Suppose that $Y_\bullet|F$ belongs to the Schubert cell $\Omega_w$ with respect to the reference flag $Y_\bullet^{\HN}|F$. 

\vspace{2mm}

\noindent {\it Step 2. Restricted algebra $A_\bullet$.} Consider a $\mathbb{Q}$-divisor $D$ such that $[D]=\xi-\mu_1f$ in $\N^1(\mathbb{P}(E))_\mathbb{Q}$. For every integer $m\geq 1$, let us define the subspace
$$\begin{array}{ll}
A_m=A_{m,t}&=\h^0(\mathbb{P}(E)|F,\mathcal{O}_{\mathbb{P}(E)}(\lfloor m(D-tF)\rfloor)) \\
 &=\text{Im}(\h^0(\mathbb{P}(E),\mathcal{O}_{\mathbb{P}(E)}(\lfloor m(D-tF)\rfloor)) \xrightarrow{\operatorname{rest}} \h^0(F,\mathcal{O}_F(m)))\\
 &\subseteq \h^0(F,\mathcal{O}_F(m)).
\end{array} $$
If follows from \cite[Prop. 4.1, Rem. 4.25]{LM09} that the restricted algebra $A_\bullet$ above computes the desired slice
$$\Delta_{Y_\bullet|F}(A_\bullet)=\Delta_{Y_\bullet}(\xi-\mu_1f)|_{\nu_1=t},$$
and that for every integer $n\geq 1$ we have
$$\Delta_{Y_\bullet}(nD)|_{\nu_1=nt}=\Delta_{Y_\bullet|F}(n(D-tF))=n\Delta_{Y_\bullet|F}(D-tF)=n\Delta_{Y_\bullet}(D)|_{\nu_1=t}.$$

\vspace{2mm}

\noindent {\it Step 3. Veronese embedding $A_{n\bullet}\subseteq A_\bullet$.} We will consider $A_{n\bullet}=\{a_{nm}\}_{m\geq 0}$ instead of $A_\bullet=\{a_m \}_{m\geq 0}$, for $n$ fixed and divisible enough such that $n\mu_1\in \mathbb{Z}$ and $nt\in \mathbb{Z}$. We also note that for $0<t<\mu_\ell-\mu_1$ we have that 
$$\mu_{\max}(S^m E\otimes \mathcal{O}_C(-m(\mu_1+t)\cdot q))=m(\mu_\ell-\mu_1-t)>0$$
and
$$\mu_{\min}(S^m E\otimes \mathcal{O}_C(-m(\mu_1+t)\cdot q))=-mt<0. $$
Therefore, by considering $n$ above large enough we may also assume that ${\mu_{\max}(S^{nm} E\otimes \mathcal{O}_C(-nm(\mu_1+t)\cdot q))>2g-1}$ for every $m\geq 1$.
 
Let $[x_1:\cdots:x_r]$ be homogeneous coordinates on $F\cong \mathbb{P}^{r-1}$. Since $Y_\bullet$ is a linear flag on $\mathbb{P}(E)$, there is an isomorphism of graded algebras
$$\phi:\bigoplus_{m\geq 0}H^0(F,\mathcal{O}_F(m))\to k[x_1,\ldots,x_r] $$
such that $A_m$ can be regarded as a subspace of $k[x_1,\ldots,x_r]_m$, the $k$-vector space of homogeneous polynomials of degree $m$ in the variables $x_1,\ldots,x_r$, for all $m\geq 0$. Via this identification, $A_\bullet$ can be seen as a graded subalgebra of $k[x_1,\ldots,x_r]$. Moreover, the projection formula implies that we can identify $A_m$ with
$$\text{Im}\Big(\h^0(C,S^mE\otimes \mathcal{O}_C(-m(\mu_1+t) \cdot q)) \xrightarrow{\operatorname{rest}} \h^0(C,(S^mE\otimes \mathcal{O}_C(-m(\mu_1+t) \cdot q))|_q)\Big). $$

\vspace{2mm}

\noindent {\it Step 4. Toric graded subalgebra $B_{n\bullet}\subseteq A_{n\bullet}$.} We shall define a graded subalgebra $B_{n\bullet} \subseteq A_{n\bullet}$ for which we can explicitly compute that
$$\Delta_{Y_\bullet|F}(B_{n\bullet}) = n\square_{\mu_1+t}^{w},$$
and we will prove that $ \Delta_{Y_\bullet|F}(B_{n\bullet}) = \Delta_{Y_\bullet|F}(A_{n\bullet})=n\Delta_{Y_\bullet|F}(A_{\bullet})$. 

\medskip

In order to construct $B_{n\bullet}$ let us note that Proposition \ref{propo: HN filtration of S^mE} implies that for every $m\geq 1$ there is a filtration
$$F_\bullet : 0=F_L\subseteq F_{L-1} \subseteq \cdots \subseteq F_1 \subseteq F_0=S^{nm} E \otimes \mathcal{O}_C(-nm(\mu_1+t)\cdot q) $$
whose successive quotients have the form
$$F_{j-1}/F_j \cong Q_{\mathbf{m}(j),\mu_1+t}=S^{m_1}Q_1 \otimes \cdots \otimes S^{m_\ell}Q_\ell\otimes \mathcal{O}_C(-nm(\mu_1+t)\cdot q) $$
\noindent for some partition $\mathbf{m}(j)\in \mathbb{N}^\ell$ of $nm$, and $\mu(Q_{\mathbf{m}(j),\mu_1+t})\leq \mu(Q_{\mathbf{m}(j+1),\mu_1+t})$ for every $j\in \{1,\ldots,L\}$.

Let us define $J=J(m)\in\{1,\ldots,L\}$ to be the largest index such that ${\mu(Q_{\mathbf{m}(J),\mu_1+t})\leq 2g-1}$. We have that for every $j\in\{J,\ldots,L-1\}$, the short exact sequence
$$0\to F_{j+1}\otimes \mathcal{O}_C(-q)\to F_{j+1}\to F_{j+1}|_q\to 0 $$
\noindent gives an exact sequence in cohomology
$$0\to \h^0(C,F_{j+1}\otimes \mathcal{O}_C(-q))\to \h^0(C,F_{j+1})\to \h^0(C, F_{j+1}|_q)\to 0, $$
\noindent since we have that $h^1(C,F_{j+1}\otimes \mathcal{O}_C(-q))=0$, by Lemma \ref{lemma: properties slope}. In particular, we get for every $j\in \{J,\ldots,L-1\}$ a surjection $\h^0(C,F_{j+1})\to \h^0(C,F_{j+1}|_q)$. Therefore, let us consider the subspaces
$$B_{nm}=\text{Im}(\h^0(C,F_{J+1}) \xrightarrow{\operatorname{rest}} \h^0(C,F_{J+1}|_q) )=\h^0(C,F_{J+1}|_q)\subseteq A_{nm}. $$

Let us choose homogeneous coordinates $[x_1:\ldots:x_r]$ on $F$ such that
$$Y_{i+1}^{\HN}=\{x_1=\ldots = x_{i}=0 \}\subseteq \mathbb{P}(E)$$
for $i=1,\ldots,r-1$. In particular, we have that 
$$Y_{\rk E_i+1}^{\HN}=\mathbb{P}((E/E_i)|_q)=\{x_1=\ldots=x_{\rk E_i}=0\}\subseteq \mathbb{P}(E)$$
for $i=1,\ldots,\ell$ and therefore the degree 1 part of the isomorphism $\phi$, 
$$\phi_1:\h^0(F,\mathcal{O}_F(1))\cong \h^0(C,(E \otimes \mathcal{O}_C(-(\mu_1+t)\cdot q))|_q) \to k[x_1,\ldots,x_r]_1, $$
is such that for every $i=0,\ldots, \ell-1$ the image of the subspace
$$\h^0(C,(E_i\otimes \mathcal{O}_C(-(\mu_1+t)\cdot q))|_q)\subseteq \h^0(C,(E \otimes \mathcal{O}_C(-(\mu_1+t)\cdot q))|_q) $$
via $\phi_1$ coincide with the subspace generated by the variables $x_1,\ldots,x_{\rk E_i}$. By taking symmetric powers it follows from Proposition \ref{propo: HN filtration of S^mE} that for each $m\geq 1$ we have that ${B_{nm}\subseteq k[x_1,\ldots,x_r]_{nm}}$, the image of ${\h^0(C,F_{J+1}|_q)\subseteq \h^0(C,F_0|_q)}$, corresponds to the subspace of homogeneous polynomials of degree $nm$ generated by polynomials of the form
 $$P(\mathbf{x})=P_1(\mathbf{x}_1)\cdots P_\ell(\mathbf{x}_\ell) $$
\noindent where $P_i$ is an homogeneous polynomial of degree $m_i\geq 0$ in the variables $\mathbf{x}_i=(x_{i,1},\ldots,x_{i,{r_i}})$, where $(x_1,\ldots,x_r)=(\mathbf{x}_\ell,\ldots,\mathbf{x}_1)$, and the $m_i$ are such that $m_1+\ldots+m_\ell=nm$ and
 $$\mu_1m_1+\ldots+\mu_\ell m_\ell > nm(\mu_1+t)+2g-1. $$
In other words, $B_{nm}$ is the subspace generated by monomials $x^\alpha=x_1^{\alpha_1}\cdots x_r^{\alpha_r}$ of total degree $|\alpha|=nm$ such that $\sum_{i=1}^r (\sigma_i-\mu_1-t) \alpha_i > 2g-1$, where $\boldsymbol{\sigma}=(\underbrace{\mu_\ell,\ldots,\mu_\ell}_{r_\ell\text{ times}},\underbrace{\mu_{\ell-1},\ldots,\mu_{\ell-1}}_{r_{\ell-1} \text{ times}},\ldots,\underbrace{\mu_1,\ldots,\mu_1}_{r_1 \text{ times}})\in \mathbb{Q}^r$.

\vspace{2mm}
 
\noindent {\it Step 5. Volume comparison and conclusion.} It follows from Lemma \ref{lemm: NO of special algebras} applied\footnote{We note that if $C\cong \mathbb{P}^1$ then all the slopes $\mu_i=\mu(Q_i)$ are integer numbers and hence the inequality ``$>2g-1$'' becomes ``$\geq 0$''.} to the collection of subspaces $\{B_{nm} \}_{m\geq 1}$ that $B_{n\bullet}$ is a graded subalgebra of $k[x_1,\ldots,x_r]$ whose Newton-Okounkov body, with respect to a linear flag $Y_\bullet$ that belongs to the Schubert cell $\Omega_w$ (see Convention \ref{convention:schubert cells}) is given by
$$ \Delta_{Y_\bullet|F}(B_{n\bullet})=n\square_{\mu_1+t}^{w}, $$
\noindent where 
$$\square_{\mu_1+t}^{w}={\textstyle \Big\{(\nu_2,\ldots,\nu_r)\in \Delta_{r-1}\;\left|\; \sum_{i=2}^r \sigma_{w(i-1)}\nu_i+\sigma_{w(r)}\left(1-\sum_{i=2}^r \nu_i\right) \geq \mu_1+t  \right.\Big\}}$$
and $\boldsymbol{\sigma}=(\underbrace{\mu_\ell,\ldots,\mu_\ell}_{r_\ell\text{ times}},\underbrace{\mu_{\ell-1},\ldots,\mu_{\ell-1}}_{r_{\ell-1} \text{ times}},\ldots,\underbrace{\mu_1,\ldots,\mu_1}_{r_1 \text{ times}})\in \mathbb{Q}^r$. Finally, we have that
$$
\begin{array}{ll}
 \vol_{\mathbb{R}^{r-1}}(\Delta_{Y_\bullet|F}(B_{n\bullet}))&=\vol_{\mathbb{R}^{r-1}}(n\square_{\mu_1+t}^{w}) \\
 &=\vol_{\mathbb{R}^{r-1}}(n\Delta_{Y_\bullet}(\xi-\mu_1f)|_{\nu_1=t}) \text{ by Lemma \ref{lemma: Huayi volume slices}}\\
 &= \vol_{\mathbb{R}^{r-1}}(\Delta_{Y_\bullet|F}(A_{n\bullet}))
\end{array} $$
and hence the inclusion $\Delta_{Y_\bullet|F}(B_{n\bullet})\subseteq \Delta_{Y_\bullet|F}(A_{n\bullet})$ leads to the equality $\Delta_{Y_\bullet|F}(B_{n\bullet}) = \Delta_{Y_\bullet|F}(A_{n\bullet})$, as the two convex bodies have equal volume. From this we conclude that
 $$\Delta_{Y_\bullet}(\xi-\mu_1f)|_{\nu_1=t}=\square_{\mu_1+t}^{w}\subseteq \mathbb{R}^{r-1}. $$ \end{proof}

The following result (from which Theorem A is easily deduced) is an immediate consequence. We keep the same notation as in Theorem \ref{theo: okounkov bodies}.

\begin{cor}\label{cor: main result}
For every linear flag $Y_\bullet$ on $\mathbb{P}(E)$ that belongs to the Schubert cell $\Omega_w$ and every big rational class $\eta=a(\xi-\mu_\ell f)+bf$ we have that 
$$\Delta_{Y_\bullet}(\eta)=\left\{(\nu_1,\ldots,\nu_r)\in \mathbb{R}^r_{\geq 0}\;|\;0 \leq \nu_1 \leq b,\;(\nu_2,\ldots,\nu_r)\in a\square_{\mu_\ell - \frac{1}{a}(b-\nu_1)}^{w} \right\}, $$
and hence the global Newton-Okounkov body of $\mathbb{P}(E)$ with respect to $Y_\bullet$ is given by
$$\begin{array}{ll}
\Delta_{Y_\bullet}(\mathbb{P}(E))=&\Big\{((a(\xi-\mu_\ell f)+bf),(\nu_1,\ldots,\nu_r))\in \N^1(\mathbb{P}(E))_\mathbb{R}\times \mathbb{R}^{r} \text{ such that } \\
& \;\; 0 \leq \nu_1 \leq b \text{ and } (\nu_2,\ldots,\nu_r)\in a\square_{\mu_\ell - \frac{1}{a}(b-\nu_1)}^{w} \Big\}.   
\end{array} $$
In particular, the global Newton-Okounkov body $\Delta_{Y_\bullet}(\mathbb{P}(E))$ is a rational polyhedral cone and it depends only on $\operatorname{gr}(\operatorname{HN}_\bullet(E))$, the graded vector bundle associated to the Harder-Narasimhan filtration of $E$.
\end{cor}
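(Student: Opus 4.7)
The plan is to deduce the formula from Theorem \ref{theo: okounkov bodies} by combining two ingredients: the reduction step of Lemma \ref{lemma:reduction to big nef}, which rewrites the Newton--Okounkov body of an arbitrary big rational class $\eta=a(\xi-\mu_\ell f)+bf$ in terms of $\Delta_{Y_\bullet}(\xi-\mu_1 f)$, and the homogeneity property $\Delta_{Y_\bullet}(pD)=p\cdot \Delta_{Y_\bullet}(D)$ for positive rationals $p$. Throughout, set $t^{*}=b-a(\mu_\ell-\mu_1)$, so that $\eta$ is nef if and only if $t^{*}\geq 0$.

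First, I would treat the semi-stable case separately: here $\mu_1=\mu_\ell$, Proposition B gives $\Delta_{Y_\bullet}(\eta)=[0,b]\times a\Delta_{r-1}$, and one checks directly that the polytope $a\square_{\mu_\ell-\frac{1}{a}(b-\nu_1)}^{w}$ collapses to $a\Delta_{r-1}$ for every $0\leq \nu_1 \leq b$, so the formula holds trivially. Assuming from now on that $E$ is unstable, I would split into the two subcases of Lemma \ref{lemma:reduction to big nef}. In the big and nef case $t^{*}\geq 0$, combining the Lemma with Theorem \ref{theo: okounkov bodies} and the identity $a\,\square_{\mu_1+s}^{w}$ viewed under the affine change of variables $\nu_1 \mapsto \nu_1 - t^{*}$ yields, for each $\nu_1\in[t^{*},b]$, a slice equal to $a\,\square_{\mu_1+\frac{\nu_1-t^{*}}{a}}^{w}$. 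A direct substitution $\mu_1+\frac{\nu_1-t^{*}}{a}=\mu_\ell-\frac{b-\nu_1}{a}$ identifies this with $a\,\square_{\mu_\ell-\frac{1}{a}(b-\nu_1)}^{w}$. On the complementary range $0\leq \nu_1\leq t^{*}$, the slice is $a\Delta_{r-1}$, which matches the polytope $a\,\square_{\mu_\ell-\frac{1}{a}(b-\nu_1)}^{w}$ because the defining linear inequality is automatically satisfied when $\mu_\ell-\frac{1}{a}(b-\nu_1)\leq \mu_1$, i.e. precisely when $\nu_1\leq t^{*}$. The big and not nef case $t^{*}<0$ is handled analogously using part (2) of Lemma \ref{lemma:reduction to big nef}: translating $a\Delta_{Y_\bullet}(\xi-\mu_1 f)_{\nu_1\geq -t^{*}}$ by $t^{*}\vec{e}_1$, and appealing to Theorem \ref{slices}(1) applied within Theorem \ref{theo: okounkov bodies}, produces the same formula, now with the range of $\nu_1$ reduced to $[0,b]$ (the lower cutoff $\nu_1\geq 0$ automatically enforcing the constraint).

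To deduce the global statement, I would use that $\Delta_{Y_\bullet}(\mathbb{P}(E))$ is defined as the closure over all big rational classes of the fibers $\Delta_{Y_\bullet}(\eta)\times \{\eta\}$, and that in the coordinates $(a,b)$ every big rational class is parametrized by $a>0$ and $b>a(\mu_\ell-\mu_{\max}(E))^{-}$ inside $\N^1(\mathbb{P}(E))_{\mathbb{R}}$. Since the formula above is piecewise linear in $(a,b,\nu_1,\dots,\nu_r)$ with finitely many pieces coming from the hyperplanes $\sigma_{w(i-1)}=\sigma_{w(j-1)}$ or $\nu_1=t^{*}$, the resulting set is a rational polyhedral cone. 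The dependence only on $\operatorname{gr}(\operatorname{HN}_\bullet(E))$ is immediate from the fact that the polytope $\square_t^{w}$ depends only on the vector $\boldsymbol{\sigma}$, i.e.\ on the ranks $r_i$ and slopes $\mu_i$ of the semi-stable quotients.

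The main obstacle I anticipate is bookkeeping: one must carefully verify that the affine rescaling and translation built into Lemma \ref{lemma:reduction to big nef} correctly transport the polytope $\square_{\mu_1+t}^{w}$ of Theorem \ref{theo: okounkov bodies} to $a\square_{\mu_\ell-\frac{1}{a}(b-\nu_1)}^{w}$, and in particular that on the overlap $\nu_1=t^{*}$ the two regimes of Lemma \ref{lemma:reduction to big nef} match up. This is a routine but genuinely indexical check, so the cleanest approach is to write the defining inequality $\sum_{i=2}^{r}\sigma_{w(i-1)}\nu_i+\sigma_{w(r)}(a-\sum_{i=2}^{r}\nu_i)\geq a\mu_\ell-(b-\nu_1)$ for $a\square_{\mu_\ell-\frac{1}{a}(b-\nu_1)}^{w}$ directly and verify it term-by-term against the two formulas coming from Lemma \ref{lemma:reduction to big nef}.
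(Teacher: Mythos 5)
Your proposal is correct and follows essentially the same route as the paper's own proof: reduce to $\xi-\mu_1 f$ via Lemma \ref{lemma:reduction to big nef}, apply Theorem \ref{theo: okounkov bodies}, rescale by $a$ and translate by $t^{*}\vec{e}_1$, and use the substitution $\mu_1+\frac{1}{a}(\nu_1-t^{*})=\mu_\ell-\frac{1}{a}(b-\nu_1)$. If anything, you are more explicit than the paper about the semi-stable case and about the slices over $0\leq\nu_1\leq t^{*}$ collapsing to $a\Delta_{r-1}$ (where the defining inequality of $a\square^{w}_{\mu_\ell-\frac{1}{a}(b-\nu_1)}$ is automatic), which the paper leaves implicit.
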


\begin{proof}
We note that if $\eta=a(\xi-\mu_\ell f)+bf$ is a big rational class on $\mathbb{P}(E)$ and the induced linear flag $Y_\bullet$ belongs to the Schubert cell $\Omega_w$ with respect to a reference flag $Y_\bullet^{\HN}$, then Theorem \ref{theo: okounkov bodies} gives
$$\Delta_{Y_\bullet}(\xi-\mu_1f)=\left\{(\nu_1,\ldots,\nu_r)\in \mathbb{R}_{\geq 0}^r \;|\;0\leq \nu_1\leq \mu_\ell-\mu_1,\;(\nu_2,\ldots,\nu_r)\in \square_{\mu_1+\nu_1}^{w} \right\} $$
and hence
$$a\Delta_{Y_\bullet}(\xi-\mu_1f)=\left\{(\nu_1,\ldots,\nu_r)\in \mathbb{R}_{\geq 0}^r \;|\;0\leq \nu_1\leq a(\mu_\ell-\mu_1),\;(\nu_2,\ldots,\nu_r)\in a\square_{\mu_1+\frac{1}{a}\nu_1}^{w} \right\}.$$
We compute that for $t^*=b-a(\mu_\ell-\mu_1)$ we have
$$a\Delta_{Y_\bullet}(\xi-\mu_1f)+t^*\vec{e}_1=\left\{(\nu_1,\ldots,\nu_r)\in \mathbb{R}_{\geq 0}^r \;|\;0\leq \nu_1\leq b,\;(\nu_2,\ldots,\nu_r)\in a\square_{\mu_1+\frac{1}{a}(\nu_1-t^*)}^{w} \right\}$$
with $\mu_1+\frac{1}{a}(\nu_1-t^*)=\mu_\ell - \frac{1}{a}(b-\nu_1)$. The result follows from Lemma \ref{lemma:reduction to big nef}. \end{proof}

{\bibliography{bibliokounkov}{}}

\end{document}